\numberwithin{equation}{section}
\DeclareMathOperator{\Dim}{dim}
\DeclareMathOperator{\Span}{span}
\DeclareMathOperator{\Diag}{diag}
\newcommand{\bo}[1]{\mathbf{#1}} 
\newcommand{\mrm}[1]{\mathrm{#1}}
\newcommand{\dd}{\mrm{d}}
\newcommand{\abs}[1]{\lvert#1\rvert}
\newcommand{\tends}{\rightarrow}
\newcommand{\norm}[1]{\lVert#1\rVert}
\newcommand{\R}{\mathbb{R}}
\newcommand{\calD}{\mathcal{D}}
\newcommand{\calF}{\mathcal{F}}
\newcommand{\calP}{\mathcal{P}}
\newcommand{\calI}{\mathcal{I}}
\newcommand{\calL}{\mathcal{L}}
\newcommand{\calB}{\mathcal{B}}
\newcommand{\calG}{\mathcal{G}}
\newcommand{\calV}{\mathcal{V}}
\newcommand{\calX}{\mathcal{X}}
\newcommand{\Om}{\Omega}
\newcommand{\bp}[1]{\mathsf{#1}}
\newcommand{\bv}{\mathsf{V}}
\newcommand{\cB}{\mathcal{B}}
\newcommand{\cF}{\mathcal{F}}
\newcommand{\Id}{\mathrm{Id}}
\newcommand{\Vp}{\calV_p}
\begin{document}

\title{Robust and efficient preconditioners for the discontinuous Galerkin time-stepping method}

\shorttitle{Robust and efficient preconditioners for the DG time-stepping method}

\author{{\sc Iain~Smears\thanks{Corresponding author. Email:iain.smears@inria.fr}\\[2pt]
INRIA Paris, 2 Rue Simone Iff, 75012 Paris, France}}
\shortauthorlist{I.~Smears}

\maketitle

\begin{abstract}
{The discontinuous Galerkin time-stepping method has many advantageous properties for solving parabolic equations. However, it requires the solution of a large nonsymmetric system at each time-step.
This work develops a fully robust and efficient preconditioning strategy for solving these systems. Drawing on parabolic inf-sup theory, we first construct a left preconditioner that transforms the linear system to a symmetric positive definite problem to be solved by the preconditioned conjugate gradient algorithm. We then prove that the transformed system can be further preconditioned by an ideal block diagonal preconditioner, leading to a condition number bounded by $4$ for any time-step size, any approximation order and any positive-definite self-adjoint spatial operators.
Numerical experiments demonstrate the low condition numbers and fast convergence of the algorithm for both ideal and approximate  preconditioners, and show the feasibility of the high-order solution of large problems.}
{discontinuous Galerkin; time discretizations; parabolic PDE; preconditioning; conjugate gradient algorithm.}
\end{abstract}

\section{Introduction}\label{sec:introduction}
The discontinuous Galerkin (DG) time-stepping method is a single-step implicit scheme defined by a variational temporal discretization of parabolic evolution equations that generalizes the backward Euler method to higher-order approximations \cite[]{Delfour1981,ErikssonJohnsonThomee1985,Jamet1978,Hulme1972}. For an introduction to this time discretization scheme in the context of abstract parabolic problems, we refer the reader to \cite[]{Thomee2006}. In certain cases, it coincides with the Radau IIA Implicit Runge--Kutta (IRK) schemes and the subdiagonal Pad\'e approximations to the exponential function \cite[]{Axelsson1969,Hairer2010,Makridakis2006}. It can be coupled with standard spatial discretization schemes, such as finite difference, finite element or spectral methods; in particular, when coupled to a spatial finite element method (FEM), it leads to a tensor-product space-time FEM.

The DG time-stepping method features many advantages that make it an attractive choice for solving parabolic problems.
First, it permits arbitrarily high-order approximation to the solution, along with superconvergence at the time-step nodes \cite[]{Akrivis2004,Chrysafinos2006,Makridakis1997}, see also \cite[]{Schotzau2000} for optimal-order a priori error estimates in natural norms with explicit dependence on the polynomial degree. Unlike linear multistep schemes, it is thus not constrained by the Dahlquist barrier theorem and it does not require an auxiliary scheme to compute the first few solution values.
Furthermore, the DG time-stepping method allows fully variable time-step sizes, approximation orders, and even spatial mesh refinement/coarsening between time-steps; it is thus well-suited for adaptive algorithms driven by rigorous a posteriori error estimates \cite[]{Akrivis2009,Eriksson1991,Eriksson1995,Makridakis2006,Schotzau2010}.
The DG time-stepping method also permits the temporal version of $hp$-refinement, where one varies both the time-step size as well as the approximation order on each time-step, thereby yielding exponential convergence rates for a broad class of solutions with singularities induced by the initial datum or by the source term~\cite[]{Schotzau2000}. In these applications, it is common to use high temporal polynomial degrees in order to match the high-order spatial approximation, see for instance the experiments in~\cite[]{Schotzau2000,vonPetersdorff2004,Schotzau2010,WerderGerdesSchotzauSchwab2001}.

In applying the DG time-stepping method in practice, we are faced with the challenge of solving a large nonsymmetric linear system at each time-step.
In this work, we are interested in developing preconditioned iterative methods for solving these linear systems. We focus here on the DG time-stepping method in the context of an abstract semi-discrete evolution problem of the form
\begin{equation}\label{eq:intro_evolution}
\begin{aligned}
M\, U^\prime(t) + A\, U(t)  &= f(t) 	& &\text{in } (0,T), \\
\end{aligned}
\end{equation}
where the solution $U\colon [0,T] \tends \bv$, with $\bv$ a finite dimensional space, where $M$ and $A$ are symmetric positive definite matrices.
Semi-discrete problems of the form~\eqref{eq:intro_evolution} arise from the application of a wide range of spatial discretizations, including conforming and nonconfirming finite elements, finite differences, and spectral methods, to a broad class of parabolic problems, such as general self-adjoint second-order and fourth-order parabolic partial differential equations (PDE).
We emphasize that our results are valid for general symmetric positive definite matrices $M$ and $A$, although the reader may consider the heat equation as a concrete example, with $M$ and $A$ respectively representing the mass and stiffness matrices for a suitable approximation space $\bv$.
The DG time-stepping method applied to the evolution problem~\eqref{eq:intro_evolution} leads to a sequence of linear systems of the general block form
\begin{equation}\label{eq:dg_linear_system}
\begin{bmatrix}
b_{00}\, M + c_{00}\, \tau \,A &  \cdots 	& b_{0p}\, M + c_{0p}\,\tau\,  A \\
\vdots & \ddots & \vdots \\
b_{p0}\, M + c_{p0}\,\tau\, A & \cdots & b_{pp} \,M + c_{pp}\,\tau \,A
\end{bmatrix}
\begin{bmatrix}
\bp u_0 \\ \vdots \\ \bp u_p	
\end{bmatrix}
= \begin{bmatrix} \bp f_0 \\ \vdots \\ \bp f_p	\end{bmatrix},
\end{equation}
where $\tau$ denotes the time-step size, the polynomial degree $p$ defines the approximation order of the scheme, the solution coefficients $\bp u_k\in \bv$ for $k=0,\dots,p$, and, after mapping the time-step interval to the reference interval $(-1,1)$, we have
\begin{equation}
\begin{aligned}
	b_{jk} \coloneqq \int_{-1}^1 \phi_k^\prime\, \phi_j\,\dd s + \phi_k(-1)\phi_j(-1),
& & & c_{jk} \coloneqq \frac{1}{2} \int_{-1}^1 \phi_k\,\phi_j\,\dd s,
\end{aligned}	
\end{equation}
where $\{\phi_k\}_{k=0}^p$ is a chosen basis of $\calP_p$ the space of real-valued polynomials of degree at most $p$. For the case $p=0$, the DG method reduces to the backward Euler method and the system is therefore symmetric. However, for $p\geq 1$, the system matrix $\bo B$ of \eqref{eq:dg_linear_system} is nonsymmetric with dimension~$\Dim \bv \times (p+1)$, which is considerably larger than for linear multistep methods; even for moderate sizes of $A$ and $M$, standard direct solution algorithms can be prohibitively expensive. Unlike the block triangular linear systems obtained from Diagonally IRK (DIRK) and  Singly Diagonally IRK (SDIRK) schemes (see \cite[]{Hairer2010} and the references therein), the system~\eqref{eq:dg_linear_system} does not immediately reveal any simple structure offering a straightforward solution algorithm. See also \cite[p.~129--130]{Hairer2010} for a discussion of SIRK methods.

Since the DG time-stepping method is connected to the Radau IIA IRK scheme, it is interesting to relate our approach to the literature on solving the systems of IRK schemes.
In particular, one of the earliest approaches \cite[]{Butcher1976,Bickart1977} for solving the linear systems of general IRK schemes, such as ~\eqref{eq:dg_linear_system}, is based on transforming the system matrix to a block-diagonal matrix with blocks of the form $M + \frac{\tau}{\mu_i} A $, where the $\{\mu_i\}_{i=0}^p$ denote here the generalized eigenvalues of $(b_{jk})$ with respect to $(c_{jk})$, and where the transformation is given by the corresponding eigenvectors. This leads to $p+1$ independent smaller linear systems that can be solved in parallel. It turns out that for the DG time-stepping method, the generalized eigenvalues $\mu_i$ are complex numbers related to the roots of the denominator of a rational Pad\'e approximation to the exponential function. Therefore, the resulting transformed system is complex-valued and non-Hermitian despite $M$ and $A$ being symmetric. In addition to the increased computational cost of complex arithmetic, this approach has an important shortcoming in terms of robustness and numerical stability for high-order approximations, due to ill-conditioning of the eigenvector transformations, as pointed out in \cite[Remark~5.4]{vonPetersdorff2004}.
This issue can be avoided by employing alternative factorizations, at the expense of the block-diagonal structure of the transformed problem: \cite{Schotzau2000} propose a factorization based on the Schur decomposition theorem, leading to a block-triangular complex transformed problem; the solution is then obtained by solving $p+1$ complex non-Hermitian systems in sequence.

Preconditioned iterative methods offer an alternative approach to decoupling the system by complex transformations. In this direction, \cite{Richter2013} propose a linear iterative fixed point scheme based on an approximation of the block LU factorization of $\bo B$, and they analyse the contraction rates of their method for $p\leq 3$.
An alternative approach is to apply directly a preconditioned Krylov subspace method to \eqref{eq:dg_linear_system}, as proposed in 
\cite{Mardal2007} propose a block-diagonal preconditioner for IRK schemes to be used with GMRES. They show that the preconditioner is robust with respect to the time-step size $\tau$, but their experiments show that it is not robust with respect to $p$.
\cite{Weller2015} develop a preconditioner specifically for the DG time-stepping method for $p=1$ (as well as for the related continuous Galerkin method of same size) based on an approximate Schur complement preconditioner for one of the unknown coefficients $\bp u_j$ in \eqref{eq:dg_linear_system}.
It is thus apparent from these references that finding preconditioning strategies for the DG time-stepping method that are robust with respect to the polynomial degree $p$ has been a challenging open problem.

In this work, we propose a robust and efficient preconditioned iterative method for the DG time-stepping method. 
Instead of focusing exclusively on the block structure of \eqref{eq:dg_linear_system}, our approach draws upon the inf-sup analysis of the method and the underlying continuous analysis of parabolic PDEs, and we take advantage of the variational structure of the DG time-stepping method in an essential way.
First, in section~\ref{sec:left_precond}, we construct and apply a left-preconditioner $\bo P^\top$ to the linear system $\bo B \bo u = \bo f$ given by \eqref{eq:dg_linear_system}, resulting in a preconditioned system $\bo L \bo u = \bo g$ with the key benefit that the transformed matrix~$\bo L\coloneqq \bo P^\top \bo B$ is symmetric positive definite.
We immediately point out that $\bo P^\top \neq \bo B^\top$, i.e.\ we are not forming the normal equations of the system.
Instead, our construction of $\bo P^\top$ is motivated by parabolic inf-sup theory, and we show that the matrix $\bo L$ represents the discrete version of the natural parabolic energy norm of the underlying evolution equation: for example, in the context of second-order parabolic PDEs, the matrix~$\bo L$ is a discrete Gram matrix for the natural solution space $H^1(H^{-1})\cap L^2(H^1)$; we refer the reader to \cite[]{Wloka1987} for an introduction to the continuous analysis of parabolic problems.
The transformed symmetric positive definite system can therefore be solved by the preconditioned conjugate gradient~(PCG) algorithm \cite[]{Hestenes1952,MalekStrakos2015,Wathen2015}, which, in our case, minimizes the error in the physically relevant norm over a Krylov subspace.
In order to obtain the fast and robust convergence of the PCG algorithm, in section~\ref{sec:spect_equiv},  we construct  a spectrally equivalent preconditioner $\bo H$ for $\bo L$, such that the condition number of the preconditioned system satisfies
\begin{equation}\label{eq:condition_number_bound}
 	\kappa(\bo H^{-1} \bo L) \leq 4,
\end{equation}
independently of all parameters $\tau$, $p$, $M$ and $A$.
Therefore, the preconditioner is fully \emph{robust} with respect to all problem and discretization parameters, including the polynomial degree. Furthermore, the preconditioners are \emph{efficient}, firstly in the sense of guaranteeing the fast convergence of the PCG algorithm in the physically relevant norm, and secondly in the sense of computational cost, for the following reasons.
In section~\ref{sec:implementation} we show that the preconditioners are well-suited for parallelization over the blocks and involve only simpler matrices of the form $A$ and $M+\mu A$ with real positive $\mu$ for which efficient solvers are often available.
Furthermore, we show experimentally in section~\ref{sec:num_exp} that the ideal preconditioners can be approximated in practice by cheap spectrally equivalent approximations, such as a small number of multigrid $V$-cycles.
We refer the reader to \cite[p.~367]{Wathen2015} and \cite[p.~705]{Hiptmair2006} for further discussions of the notion of efficiency of preconditioners.

This paper is organized as follows: after introducing in detail the DG time-stepping method in section~\ref{sec:preliminaries}, we present the preconditioning strategy in section~\ref{sec:preconditioners}, where we construct the preconditioners $\bo P^\top$ and $\bo H$, and where we establish the condition number bound~\eqref{eq:condition_number_bound}. Section~\ref{sec:implementation} considers the efficient implementation of the method, and section~\ref{sec:num_exp} presents the results of numerical experiments testing the robustness and efficiency of the preconditioners.

\section{Preliminaries}\label{sec:preliminaries}

In this section, we introduce in detail the DG time-stepping method in the context of self-adjoint semi-discrete dissipative evolution equations.
We also introduce two key ingredients in our approach. The first ingredient is the well-known temporal reconstruction operator commonly used in a posteriori analysis \cite[]{Makridakis2006}, which is the subject of section~\ref{sec:reconstruction}. The second ingredient is a spectral equivalence result for preconditioners from \cite[]{Pearson2012}, which we present in section~\ref{sec:negative_norm}.

\subsection{Approximation space}\label{sec:approximation_space} Let $\bv$ denote a finite dimensional real vector space, equipped with a given basis $\{\bp e_i\}_{i=1}^{\Dim \bv}$.
Let $\bv$ be equipped with two inner products $(\cdot,\cdot)_{M}$ and $(\cdot,\cdot)_{A}$, and let $M$ and $A$ be their matrix representations, given by $M_{ij} \coloneqq (\bp e_i,\bp e_j)_M$ and  $A_{ij}\coloneqq (\bp e_i,\bp e_j)_{A}$ for all $i$, $j=1,\dots,\Dim \bv$. The inner products $(\cdot,\cdot)_M$ and $(\cdot,\cdot)_A$ induce the norms $\norm{\cdot}_M$ and $\norm{\cdot}_A$ on $\bv$.
Let $\calP_p$ denote the space of real-valued polynomials of degree at most $p$, and let $\Vp$ be the space of $\bv$-valued polynomials of a single real variable with degree at most $p$. 
For example, if $\{\phi_j\}_{j=0}^p$ is a basis of $\calP_p$, then every $v\in \Vp$ is of the form 
\[
v\colon s\mapsto v(s) = \sum_{j=0}^{p} \bp{v}_j\, \phi_j(s),
\]
with coefficients $\bp v_j \in \bv$ for each $j=0,\dots,p$. 
We gather these coefficients in the vector $\bo v =(\bp v_0,\dots,\bp v_p) \in \bv^{p+1}$.
It follows that  $\Dim \Vp $, the dimension of the space $\Vp$, is equal to $ (p+1)\times \Dim \bv$.
An equivalent point of view is to consider $\Vp$ as the tensor-product space derived from $\calP_p$ and $\bv$.

\begin{remark}\label{rem:basis}
In this setting, it is natural to view functions in $\Vp$ as mappings from time into $\bv$, and as a slight abuse of standard terminology, we will say that $\{\phi_j\}_{j=0}^p$ forms a basis of $\Vp$. Of course, this must be interpreted as the lengthier statement that $\{\bp e_i \phi_j\}_{i=1,\dots,\Dim \bv}^{j=0,\dots,p}$ forms a basis of $\Vp$, where $\{\bp e_i\}_{i=1}^{\Dim\bv}$ is a basis of~$\bv$.
\end{remark}

If $L$ is a linear operator between $\bv$ and either itself or its dual $\bv^*$, then we can extend $L$ to $\Vp$ by applying it coefficient-wise: 
\begin{equation}
\begin{aligned}
	L v(s) \coloneqq \sum_{j=0}^p \left(L \bp v_j\right) \phi_j(s) & & &\forall\, v=\sum_{j=0}^p \bp v_j \phi_j \in \Vp.
\end{aligned}
\end{equation}
The inner products $(\cdot,\cdot)_M$ and $(\cdot,\cdot)_A$ also extend to $\Vp$ in the natural way.
Likewise, if $ \pi \colon \calP_p \tends \calP_p$ is a linear operator then we define $\pi v \coloneqq \sum_{j=0}^p \bp v_j  \left(\pi\phi_j\right)$. For instance, we define the time derivative $v^\prime$ of $v\in \Vp$ by $v^\prime \coloneqq \sum_{j=0}^p \bp v_j \,\phi_j^\prime $.

Let $(\cdot,\cdot)_{L^2}$ and $\norm{\cdot}_{L^2}$ denote respectively the $L^2$-inner product and $L^2$-norm over the interval $(-1,1)$.
Let $\{L_k\}_{k\geq 0}$ denote the set of Legendre polynomials, as defined for instance in \cite[Sec.~8.9]{TOISP}. The Legendre polynomials are orthogonal in the $L^2$-inner product: for all $k$, $j\geq 0$, 
\begin{equation}
\left(L_k,L_j\right)_{L^2} = \frac{2}{2 k+1} \delta_{kj}.
\end{equation}
Furthermore, $L_k(1) = 1$ and $L_k(-1)=(-1)^k$ for all $k\geq 0$.

\subsection{The DG time-stepping method}
After mapping a given current time-step interval to the reference interval $(-1,1)$, the DG time-stepping method leads to the discrete problem of finding $u\in \Vp$ such that
\begin{equation}\label{eq:dg}
\begin{aligned}
\cB(u,v) &= \cF(v) 	& &\quad\forall\,v\in \Vp,
\end{aligned}
\end{equation}
where $\cF$ is a bounded linear functional on $\Vp$, and the bilinear form $\cB\colon \Vp \times \Vp \tends \R $ is defined by
\begin{equation}\label{eq:B_def}
\cB(u,v) \coloneqq \int_{-1}^{1} \left( u^\prime,v\right)_M \dd s + \left( u(-1),v(-1) \right)_M + \frac{\tau}{2} \int_{-1}^1\left(u,v\right)_A\dd s  ,
\end{equation}
where $\tau$ denotes the time-step size and $p$ denotes the polynomial degree of the approximation.
In practice, the time-step size, the polynomial degree, or even the matrices $M$ and $A$ may vary between time-steps, although on any given time-step the linear system has the general form of \eqref{eq:dg}.
Given a basis $\{\phi_j\}_{j=0}^p$ for $\Vp$, the problem \eqref{eq:dg} can be represented by a linear system
\begin{equation}\label{eq:B_system}
	\bo B \bo u = \bo f,
\end{equation}
where $\bo B$ is a $(p+1)\times (p+1)$ block matrix, where $\bo u \in \bv^{p+1}$ is the vector of coefficients of the expansion of $u$, and where $\bo f = (\bp f_0,\cdots, \bp f_p)$ with each $\bp f_j \in \bv^*$, $j=0,\dots,p$, being the restriction of $\cF$ to $\Span{\phi_j}\otimes \bv$. Therefore, the system \eqref{eq:B_system} has the block structure shown in~\eqref{eq:dg_linear_system}. 

\begin{remark}\label{rem:B_challenges}
In order to motivate our approach to preconditioning, it will be helpful to bear in mind the following point concerning the structure of the bilinear form $\cB$.
It is well-known that the bilinear form $\cB$ enjoys the following coercivity property
\begin{equation}\label{eq:coercivity_B}
\begin{aligned}
\cB(v,v) = \frac{1}{2} \norm{v(1)}_M^2 + \frac{1}{2} \norm{v(-1)}_M^2 + \frac{\tau}{2} \int_{-1}^1\norm{v}_A^2\,\dd s & & & \forall\,v\in \Vp,
\end{aligned}	
\end{equation}
which enables us to deduce the well-posedness of \eqref{eq:dg}. Unfortunately, the norm defined by the right-hand side of \eqref{eq:coercivity_B} does not include the time derivative, and thus coercivity and boundedness of $\calB$ in this norm can only be obtained from an inverse inequality for the finite dimensional space $\Vp$, at the expense of introducing constants that are not robust with respect to $\tau$ and $p$.
This point suggests that norm preconditioners for $\bo B$ that are based on the right-hand side of \eqref{eq:coercivity_B} are unlikely to be robust with respect to the discretization parameters.
\end{remark}

As we shall see below, our approach is based on the inf-sup stability of the bilinear form $\cB$, which provides a sharper analysis of the structure of the problem than the coercivity result of \eqref{eq:coercivity_B}. Some of our main tools are the reconstruction operator defined in section~\ref{sec:reconstruction}, and a corresponding suitable negative norm along with a key spectral equivalence result, which we recall in section~\ref{sec:negative_norm}. 

\subsection{Reconstruction operator}\label{sec:reconstruction}
We introduce the reconstruction operator $\calI \colon \calP_p \tends \calP_{p+1}$, defined by
\begin{equation}\label{eq:calI_definition}
\begin{aligned}
\calI v \coloneqq v - v(-1) \frac{(-1)^{p}\left(L_p-L_{p+1}\right)}{2} & & &\forall \,v\in \calP_{p}.
\end{aligned}
\end{equation}
As noted above, $\calI$ naturally extends to an operator from $\Vp$ to $\calV_{p+1}$. 
As explained in Remark~\ref{rem:radau_interpolant} below, $\calI$ is the reconstruction operator commonly used in a posteriori error analysis~\cite[]{Makridakis2006}.
The key benefit of the reconstruction operator $\calI$ for our purposes is that we  may express the bilinear form $\cB$ in the following equivalent form
\begin{equation}\label{eq:b_form_equiv}
\begin{aligned}
\calB(u,v) = \int_{-1}^1 \big((\calI u)^\prime , v\big)_M + \frac{\tau}{2} 	\left(u,v\right)_A \dd s 
& & &\forall\,u,\,v\in \Vp.
\end{aligned} 
\end{equation}
We emphasize here that $(\calI u)^\prime \in \Vp$ since $\calI u \in \calV_{p+1}$ for $u\in \Vp$.
To see how \eqref{eq:b_form_equiv} is obtained from \eqref{eq:calI_definition}, we note that the properties of the Legendre polynomials imply that, for any $v\in \Vp$,
\begin{equation}\label{eq:calI_properties}
\begin{aligned}
\calI v(1) = v(1), & & & \calI v(-1) = 0, & & &
\int_{-1}^1 (\calI v, w)_M\,\dd s = \int_{-1}^1 (v, w)_M\,\dd s \quad \forall \,w\in \calV_{p-1}.
\end{aligned}
\end{equation}
Substituting $w$ for $w^\prime$, which belongs to $\calV_{p-1}$ whenever $w\in \Vp$, in \eqref{eq:calI_properties} and using integration by parts shows that
\begin{equation}\label{eq:calI_properties_2}
	\int_{-1}^1 \left( (\calI v)^\prime,  w \right)_M\,\dd s = \int_{-1}^1 ( v^\prime, w)_M\,\dd s + \left( v(-1), w(-1) \right)_M \quad \forall\,w\in \Vp.
\end{equation}
The equivalent form of $\cB$ given in~\eqref{eq:b_form_equiv} then follows from \eqref{eq:calI_properties_2}.

\begin{remark}\label{rem:radau_interpolant}
The operator $\calI$ is the reconstruction operator commonly used in the a~posteriori error analysis of the DG time-stepping method \cite[]{Makridakis2006}. This operator is often defined by the interpolation conditions $\calI v(s_k) = v(s_k)$ for all $k=1,\dots,p+1$, where $-1<s_k \leq s_{p+1}=1$ are the Gauss--Radau quadrature points, in addition to a further condition, chosen here as $\calI v(-1)=0$. It turns out that the definition given above in \eqref{eq:calI_definition} and this interpolatory definition are equivalent, since the Gauss--Radau points are the roots of the polynomial $(1-s)P_{p}^{(1,0)}=L_p-L_{p+1}$, see \cite[]{Gautschi1997} and \cite[eq.~8.961.5]{TOISP}.  We note here that a straightforward consequence of the above properties is that, for any $v\in\calP_{p}$, we have $v\equiv 0$ in $(-1,1)$ if and only if $(\calI v)^\prime \equiv 0$ in $(-1,1)$. We also note that we will not need the Gauss--Radau points for the implementation of the preconditioners in this work.
\end{remark}

\subsection{Negative norms and a result of Pearson and Wathen}\label{sec:negative_norm}
In addition to the norms $\norm{\cdot}_M$ and $\norm{\cdot}_{A}$, we will also use the negative norm $\norm{\cdot}_{MA^{-1}M}$ defined by
\begin{equation}\label{eq:negative_norm}
\begin{aligned} 
\norm{\bp v}_{MA^{-1}M} \coloneqq \sup_{\bp w \in \bv\setminus\{0\}} \frac{ (\bp v,\bp w)_M }{\norm{\bp w}_A}  && &\forall\,\bp v \in \bv .
\end{aligned}
\end{equation}
The negative norm $\norm{\cdot}_{MA^{-1}M}$ can be equivalently characterized by the identity
\begin{equation}\label{eq:A_inv_norm_identity}
\begin{aligned}
\norm{\bp v}_{MA^{-1}M}^2  = \bp v^\top M A^{-1} M \bp v & & & \forall \,\bp v\in \bv,
\end{aligned}
\end{equation}
where $\bp v$ is identified with its vector representation in the basis $\{\bp e_i\}_{i=1}^{\Dim \bv}$. Indeed, \eqref{eq:A_inv_norm_identity} follows from the upper bound $\norm{\bp v}_{MA^{-1}M}^2 \geq \bp v^\top M A^{-1} M \bp v$, which is obtained by choosing $\bp w = A^{-1} M \bp v$ in \eqref{eq:negative_norm}, and from the lower bound $\norm{\bp v}_{MA^{-1}M}^2 \leq \bp v^\top M A^{-1} M\bp v$, which is obtained by applying the Cauchy--Schwarz inequality as follows: $\bp w^\top M \bp v = \bp w^\top A \left( A^{-1} M \bp v\right) \leq \sqrt{ \bp w^\top A \,\bp w } \;\sqrt{\bp v^\top M A^{-1} M \bp v }$, where we have simplified $( A^{-1} M \bp v)^\top A (A^{-1} M \bp v) = \bp v^\top M A^{-1} M \bp v$.
The identity \eqref{eq:A_inv_norm_identity} thus shows that the norm $\norm{\cdot}_{MA^{-1}M}$ is in fact induced by an inner product $\left( \cdot , \cdot \right)_{MA^{-1}M}$ represented by the matrix $M A^{-1} M$.

The following result due to J.~W.~Pearson and A.~J.~Wathen~\cite[]{Pearson2012} will play a key part in the construction of our preconditioners.

\begin{lemma}\label{lem:pearson}
Let $A$ and $M$ be arbitrary symmetric positive definite matrices and let $\mu\geq 0$ be a nonnegative real number. Then we have
\begin{equation}\label{eq:pearson}
\begin{aligned}
	\frac{1}{2} \leq \frac{ \bp v^\top \left( M A^{-1} M + \mu \,A \right) \bp v}{ \bp v^\top \left( M + \sqrt{\mu} \,A \right) A^{-1} \left( M + \sqrt{\mu}\, A \right) \bp v}\leq 1 & & &\forall\, \bp v \in \bv\setminus\{0\}.
\end{aligned}
\end{equation}
\end{lemma}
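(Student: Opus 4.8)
The plan is to reduce the matrix inequality to a scalar inequality by expanding the quadratic form in the denominator, and then to recognize the crucial estimate as the Cauchy--Schwarz inequality in the $A$-inner product---indeed, precisely the one already used in the derivation of \eqref{eq:A_inv_norm_identity}.

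First I would expand the denominator. Since $A$ is symmetric, a direct computation gives
\begin{equation*}
\left( M + \sqrt{\mu}\,A\right) A^{-1} \left( M + \sqrt{\mu}\,A \right) = M A^{-1} M + 2\sqrt{\mu}\, M + \mu\, A,
\end{equation*}
so that, writing $a \coloneqq \bp v^\top M A^{-1} M \bp v$, $b\coloneqq \bp v^\top A \bp v$ and $m \coloneqq \bp v^\top M \bp v$, the ratio appearing in \eqref{eq:pearson} becomes
\begin{equation*}
\frac{a + \mu\, b}{a + \mu\, b + 2\sqrt{\mu}\, m}.
\end{equation*}
Here $a \geq 0$ and $b,\,m>0$ for $\bp v \neq 0$, since $A$ and $M$ are symmetric positive definite. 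The upper bound in \eqref{eq:pearson} is then immediate, as the extra term $2\sqrt{\mu}\,m$ in the denominator is nonnegative (equality holds when $\mu=0$).

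For the lower bound, I would note that the claimed inequality $\tfrac12 \leq (a+\mu b)/(a+\mu b + 2\sqrt{\mu}m)$ is equivalent, after clearing the strictly positive denominator, to $a + \mu\, b \geq 2\sqrt{\mu}\, m$. By the arithmetic--geometric mean inequality, $a + \mu\, b \geq 2\sqrt{\mu}\sqrt{a\,b}$, so it suffices to establish $a\,b \geq m^2$, that is,
\begin{equation*}
\left( \bp v^\top M A^{-1} M \bp v \right)\left( \bp v^\top A \bp v \right) \geq \left( \bp v^\top M \bp v \right)^2 .
\end{equation*}

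This final inequality is exactly the Cauchy--Schwarz inequality in the $A$-inner product applied to the vectors $A^{-1}M\bp v$ and $\bp v$; equivalently, it is the special case $\bp w = \bp v$ of the Cauchy--Schwarz estimate already displayed in the derivation of \eqref{eq:A_inv_norm_identity}. I do not expect any genuine obstacle: once the denominator is expanded and the ratio is rewritten in terms of $a$, $b$ and $m$, both bounds follow from elementary inequalities. The only point meriting a little care is the validity of the reduction steps (clearing the denominator and applying the arithmetic--geometric mean inequality), which is guaranteed because all three quantities $a$, $b$, $m$ are nonnegative and the denominator is strictly positive whenever $\bp v \neq 0$.
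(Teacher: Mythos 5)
Your proof is correct and is essentially the paper's own argument: both establish the upper bound by observing that the denominator exceeds the numerator by the nonnegative cross term $2\sqrt{\mu}\,\bp v^\top M \bp v$, and both establish the lower bound by absorbing that cross term via the Cauchy--Schwarz inequality $\left(\bp v^\top M \bp v\right)^2 \leq \left(\bp v^\top M A^{-1} M \bp v\right)\left(\bp v^\top A\, \bp v\right)$ combined with the arithmetic--geometric mean inequality. The only cosmetic difference is the order of the two estimates: the paper first writes $\norm{\bp v}_M^2 \leq \norm{\bp v}_{MA^{-1}M}\norm{\bp v}_A$ using the negative-norm characterization \eqref{eq:negative_norm} and then applies Young's inequality $ab \leq \frac{1}{2}a^2 + \frac{1}{2}b^2$, whereas you apply AM--GM first and Cauchy--Schwarz second.
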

\begin{proof}
For the original proof of this result, see \cite[Thm~4]{Pearson2012}. We provide here an alternative proof which highlights the negative norm structure of the matrix $M A^{-1} M$, as given in \eqref{eq:negative_norm}. First, the upper bound of \eqref{eq:pearson} is immediate from
\[
\norm{\bp v}_{MA^{-1}M}^2 + \mu \norm{\bp v}_{A}^2 \leq \norm{\bp v}_{MA^{-1}M}^2 + 2 \sqrt{\mu} \norm{\bp v}_M^2 + \mu \norm{\bp v}_{A}^2.
\]
Now, for any $\bp v\in \bv$, we have $\norm{\bp v}_M^2 \leq \norm{\bp v}_{MA^{-1}M} \norm{\bp v}_A$ by \eqref{eq:negative_norm}.
It follows from the inequality $ ab\leq \frac{1}{2} a^2+\frac{1}{2} b^2$ for any $a$, $b\in \R$ that
\begin{equation}\label{eq:pearson_1}
\begin{split}
 \bp v^\top \left( M + \sqrt{\mu} \,A \right) A^{-1} \left( M + \sqrt{\mu}\, A \right) \bp v
& = \norm{\bp v}_{MA^{-1}M}^2 + 2\sqrt{\mu}\norm{\bp v}_M^2 + \mu \norm{\bp v}_{A}^2 \\
& \leq 2 \norm{\bp v}_{MA^{-1}M}^2 + 2 \mu \norm{\bp v}_{A}^2= 2\; \bp v^\top \left( M A^{-1} M + \mu\, A \right) \bp v.
\end{split}
\end{equation}
Note that \eqref{eq:pearson_1} is precisely the lower bound of \eqref{eq:pearson}.
\end{proof}

\section{Preconditioners}\label{sec:preconditioners}

\subsection{Left preconditioner}\label{sec:left_precond}
In this section, we construct a left preconditioner that will transform the linear system~\eqref{eq:B_system} to a symmetric positive definite system that represents a discrete parabolic energy norm. Our left preconditioner is defined simply in terms of a substitution for the test function $v$ appearing in the bilinear form $\calB(u,v)$. 
We start by defining the operator $P\colon \Vp\tends \Vp$ by
\begin{equation}
P v \coloneqq A^{-1} M (\calI v)^\prime + \frac{\tau}{2} v,
\end{equation}
where we recall the definition of the reconstruction operator $\calI$ from \eqref{eq:calI_definition} and its natural to extension to $\Vp$ as explained in section~\ref{sec:approximation_space}. 
The fact that $P v \in \Vp$ for any $v\in \Vp$ implies that the solution $u$ of \eqref{eq:dg} also solves
\begin{equation}\label{eq:dg_mod}
\begin{aligned}
\calL(u,v) & = \calG(v) & &\quad\forall\,v\in \Vp,
\end{aligned}
\end{equation}
where the bilinear form $\calL$ and linear functional $\calG$ are obtained by substituting the test function $Pv$ in place of $v$:
\begin{equation}\label{eq:L_def}
\begin{aligned}
&	\calL(u,v) \coloneqq \cB(u,P v), & & \calG(v) \coloneqq\cF(Pv) & &\forall\,u,\,v\in \Vp.
\end{aligned}
\end{equation}
We note that the operator $P$ can be viewed as defining a left preconditioner for the linear system \eqref{eq:B_system} that represents \eqref{eq:dg}. Indeed, let $\bo P$ denote the matrix representation of the linear operator $P$ in the basis $\{\phi_j\}_{j=0}^p$. Then, for any functions $u$ and $v \in \Vp$, and their respective vector representations $\bo u$ and $\bo v \in \bv^{p+1}$, we have $\cB(u,P v) = \bo v^\top \bo P^\top \bo B \bo u$ and $\cF(Pv) = \bo v^\top \bo P^\top \bo f$. Therefore, the linear system \eqref{eq:dg_mod} is equivalent to 
\begin{equation}\label{eq:lin_alg_mod}
\bo L \bo u = \bo g,
\end{equation}
with $\bo L = \bo P^\top \bo B$ and $\bo g = \bo P^\top \bo f$ denoting respectively the left-preconditioned matrix and right-hand side.

As the following theorem shows, $\calL$ represents a discrete version of the natural energy norm for parabolic problems. Indeed, in applications to second-order parabolic PDEs, $\calL$ is comparable to the inner product of $L^2(H^1)\cap H^1(H^{-1})$, which is the natural solution space of the continuous problem~\cite[]{Wloka1987}.

\begin{theorem}\label{thm:L_form}
Let the bilinear form $\calL\colon \Vp\times \Vp\tends \R$ and linear functional $\calG\colon \Vp\tends \R$ be defined by~\eqref{eq:L_def}. Then, for any functions $u$ and $v\in \Vp$, we have the identity
\begin{equation}\label{eq:L_form}
\begin{split}
\calL(u,v) &= \int_{-1}^1 \big( (\calI u)^\prime, (\calI v)^\prime\big)_{MA^{-1}M} \,\dd s + \frac{\tau^2}{4}\int_{-1}^1 (u,v)_A\,\dd s \\
& \qquad+ \frac{\tau}{2} \left(u(1),v(1)\right)_M +\frac{\tau}{2}\left(u(-1),v(-1)\right)_M  
\end{split}
\end{equation}
Therefore, $\calL$ is symmetric and positive definite on $\Vp$, and for any symmetric positive definite matrices $A$ and $M$, any $p\geq 0$ and any $\tau>0$, we have 
\begin{equation}\label{eq:calL_calA_bounds}
\begin{aligned}
\calL(v,v) \geq \norm{v}_{\calD}^2, & & & \abs{\calL(v,w)}\leq 2 \norm{v}_{\calD}\norm{w}_{\calD} & & &\forall\,v,\,w\in \Vp,
\end{aligned}
\end{equation}
where $\norm{\cdot}_{\calD}\coloneqq \sqrt{\calD(\cdot,\cdot)}$ is the norm induced by the auxiliary bilinear form $\calD$ defined by:
\begin{equation}\label{eq:calD_def}
	\calD(u,v) \coloneqq \int_{-1}^1 \big( (\calI u)^\prime, (\calI v)^\prime)\big)_{MA^{-1}M}\, \dd s + \frac{\tau^2}{4}\int_{-1}^1 (u,v)_A\dd s.
\end{equation}
Thus, the function $u\in \Vp$ solves \eqref{eq:dg} if and only if $u$ solves \eqref{eq:dg_mod}.
\end{theorem}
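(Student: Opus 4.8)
The plan is to prove the identity \eqref{eq:L_form} first, and then deduce symmetry, positive-definiteness, and the bounds \eqref{eq:calL_calA_bounds} as consequences; the equivalence of \eqref{eq:dg} and \eqref{eq:dg_mod} will follow immediately from positive-definiteness, since $\bo L = \bo P^\top \bo B$ is then invertible. To establish \eqref{eq:L_form}, I would start from the equivalent representation of $\calB$ furnished by \eqref{eq:b_form_equiv}, namely $\calB(u,w) = \int_{-1}^1 \big((\calI u)^\prime, w\big)_M\,\dd s + \frac{\tau}{2}\int_{-1}^1 (u,w)_A\,\dd s$, and substitute $w = Pv = A^{-1} M (\calI v)^\prime + \frac{\tau}{2} v$. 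This produces four terms. The two "diagonal" terms are straightforward: the $M$-pairing of $(\calI u)^\prime$ against $A^{-1}M(\calI v)^\prime$ collapses to the negative-norm inner product $\big((\calI u)^\prime,(\calI v)^\prime\big)_{MA^{-1}M}$ using the identity \eqref{eq:A_inv_norm_identity} (in its polarized, bilinear form, since $(\bp a, A^{-1}M \bp b)_M = \bp a^\top M A^{-1} M \bp b$), and the $A$-pairing of $u$ against $\frac{\tau}{2} v$ scaled by $\frac{\tau}{2}$ yields the $\frac{\tau^2}{4}\int (u,v)_A$ term.

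The crux is the two cross terms, which I expect to be the main obstacle. These are $C_1 \coloneqq \frac{\tau}{2}\int_{-1}^1 \big((\calI u)^\prime, v\big)_M\,\dd s$ and $C_2 \coloneqq \frac{\tau}{2}\int_{-1}^1 \big(u, A^{-1}M(\calI v)^\prime\big)_A\,\dd s = \frac{\tau}{2}\int_{-1}^1 \big(u,(\calI v)^\prime\big)_M\,\dd s$. My claim is that $C_1 + C_2$ telescopes into the two boundary terms $\frac{\tau}{2}(u(1),v(1))_M + \frac{\tau}{2}(u(-1),v(-1))_M$. To see this, I would reverse the manipulation used to derive \eqref{eq:calI_properties_2}: recognizing that $v \in \Vp$ means $v = (\calI v)'$ only up to the correction captured by \eqref{eq:calI_properties_2}, the cleanest route is to apply \eqref{eq:calI_properties_2} directly. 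Indeed \eqref{eq:calI_properties_2} with the roles set appropriately gives $\int_{-1}^1 \big((\calI u)^\prime, v\big)_M\,\dd s = \int_{-1}^1 (u^\prime, v)_M\,\dd s + (u(-1),v(-1))_M$, and similarly $\int_{-1}^1 \big((\calI v)^\prime, u\big)_M\,\dd s = \int_{-1}^1 (v^\prime, u)_M\,\dd s + (v(-1),u(-1))_M$. Adding these two and using plain integration by parts, $\int_{-1}^1 (u^\prime,v)_M + (v^\prime,u)_M\,\dd s = (u(1),v(1))_M - (u(-1),v(-1))_M$, the two interior $(-1)$-boundary contributions cancel against half of the integration-by-parts boundary term, leaving exactly $(u(1),v(1))_M + (u(-1),v(-1))_M$. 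Multiplying by $\frac{\tau}{2}$ delivers the claimed boundary terms. The symmetry of the resulting expression \eqref{eq:L_form} in $u$ and $v$ is then manifest, since every term is symmetric.

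With \eqref{eq:L_form} in hand, symmetry of $\calL$ is immediate. For positive-definiteness and the bounds \eqref{eq:calL_calA_bounds}, I would compare $\calL(v,v)$ with $\calD(v,v)$. Taking $u=v$ in \eqref{eq:L_form}, the first two terms are exactly $\calD(v,v)\geq 0$, and the two remaining terms $\frac{\tau}{2}\norm{v(1)}_M^2 + \frac{\tau}{2}\norm{v(-1)}_M^2$ are nonnegative, so $\calL(v,v)\geq \calD(v,v) = \norm{v}_{\calD}^2$, which gives the coercivity bound. Positive-definiteness follows because $\calD(v,v)=0$ forces $(\calI v)^\prime \equiv 0$ (the first term, an integral of a genuine norm, vanishes) and $\int (v,v)_A\,\dd s = 0$; either of these, via Remark~\ref{rem:radau_interpolant} or directly from positive-definiteness of $A$, forces $v\equiv 0$. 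For the boundedness constant $2$, the key observation is that the two boundary terms in \eqref{eq:L_form} can themselves be controlled by $\calD$: I would show $\frac{\tau}{2}\norm{v(1)}_M^2 + \frac{\tau}{2}\norm{v(-1)}_M^2 \leq \norm{v}_{\calD}^2$, so that $\calL(v,v)\leq 2\norm{v}_{\calD}^2$, and then the off-diagonal bound $\abs{\calL(v,w)}\leq 2\norm{v}_{\calD}\norm{w}_{\calD}$ follows by Cauchy--Schwarz applied to the symmetric positive semidefinite form $\calL$ (using $\abs{\calL(v,w)}\leq \calL(v,v)^{1/2}\calL(w,w)^{1/2}$). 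Establishing the boundary estimate $\frac{\tau}{2}\norm{v(1)}_M^2 + \frac{\tau}{2}\norm{v(-1)}_M^2 \leq \norm{v}_{\calD}^2$ robustly in $\tau$ and $p$ is the second delicate point; I anticipate it requires a trace-type inequality tying the endpoint $M$-norms of $v$ to $\int\norm{(\calI v)^\prime}_{MA^{-1}M}^2\,\dd s$ and $\frac{\tau^2}{4}\int\norm{v}_A^2\,\dd s$, likely exploiting the properties \eqref{eq:calI_properties} of $\calI$ together with the negative-norm duality \eqref{eq:negative_norm}.
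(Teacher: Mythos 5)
Your proposal is correct, and for the identity \eqref{eq:L_form} it coincides with the paper's proof step for step: substitute $Pv$ into \eqref{eq:b_form_equiv}, collapse the diagonal terms using the polarized form of \eqref{eq:A_inv_norm_identity}, and telescope the two cross terms via \eqref{eq:calI_properties_2} plus integration by parts; your coercivity argument is also the paper's, and your equivalence argument is actually slightly slicker --- invertibility of $\bo L=\bo P^\top\bo B$ forces invertibility of both square factors $\bo P^\top$ and $\bo B$, so you avoid the paper's appeal to the known well-posedness of \eqref{eq:dg}. The one genuine divergence is the continuity constant $2$. The paper gets it by applying Cauchy--Schwarz termwise to the four-term expression \eqref{eq:L_form_2}, using the duality \eqref{eq:negative_norm} on the two cross terms and then $(a_1+a_2)(b_1+b_2)\le 2\sqrt{a_1^2+a_2^2}\,\sqrt{b_1^2+b_2^2}$, where $a_1,a_2$ (resp.\ $b_1,b_2$) are the two constituents of $\norm{u}_{\calD}$ (resp.\ $\norm{v}_{\calD}$). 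You instead bound the diagonal, $\calL(v,v)\le 2\norm{v}_{\calD}^2$, and invoke the abstract Cauchy--Schwarz inequality for the symmetric positive semidefinite form $\calL$ --- equally valid, same constant. The boundary estimate you flag as a delicate anticipated ``trace-type inequality'' needs no new ingredient: take $u=v$ in the cross-term identity you already derived, then apply the duality \eqref{eq:negative_norm} pointwise, Cauchy--Schwarz in $L^2$, and $2ab\le a^2+b^2$:
\begin{align*}
\frac{\tau}{2}\norm{v(1)}_M^2+\frac{\tau}{2}\norm{v(-1)}_M^2 &= \tau\int_{-1}^1\big((\calI v)^\prime,v\big)_M\,\dd s \\
&\le 2\left(\int_{-1}^1\norm{(\calI v)^\prime}_{MA^{-1}M}^2\,\dd s\right)^{\frac{1}{2}}\frac{\tau}{2}\left(\int_{-1}^1\norm{v}_A^2\,\dd s\right)^{\frac{1}{2}} \le \norm{v}_{\calD}^2,
\end{align*}
which is robust in $\tau$, $p$, $M$ and $A$ by construction. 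In short: the paper's termwise estimate buys brevity (no diagonal detour), while your route buys a cleaner structural statement --- $\calL$ and $\calD$ are equivalent with constants $1$ and $2$ on the diagonal, from which the off-diagonal bound is automatic.
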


\begin{proof}
First, it is straightforward to obtain the following identities which hold for any $u,\,v \in \Vp$:
\begin{equation}\label{eq:inner_product_identities}
\begin{aligned}
\big((\calI u)^\prime,A^{-1}M (\calI v)^\prime\big)_M = ((\calI u)^\prime , (\calI v )^\prime)_{MA^{-1}M},  & & &
\big(u,A^{-1}M (\calI v)^\prime\big)_A   = (u,(\calI v)^\prime)_M.
\end{aligned}
\end{equation}
Next, we use the identities in \eqref{eq:inner_product_identities} to simplify the different terms in $\calL(u,v)=\calB(u,Pv)$, and eventually we obtain
\begin{equation}\label{eq:L_form_2}
\begin{split}
\calL(u,v) & = \int_{-1}^1 \big( (\calI u)^\prime, (\calI v)^\prime)\big)_{MA^{-1}M}\, \dd s + \frac{\tau^2}{4}\int_{-1}^1 (u,v)_A\,\dd s \\ &\qquad + \frac{\tau}{2} \int_{-1}^1 \big( (\calI u)^\prime , v \big)_M +  \big( u  , (\calI v)^\prime \big)_M\, \dd s .
\end{split}
\end{equation}
By \eqref{eq:calI_properties_2}, we have
\[
\begin{split}
	 \int_{-1}^1 \big( (\calI u)^\prime , v \big)_M +  \big( u  , (\calI v)^\prime \big)_M \,\dd s & =  \int_{-1}^1 \frac{\dd}{\dd s}(u,v)_M\,\dd s +   2 \left(u(-1),v(-1)\right)_M \\
&= \left(u(1),v(1)\right)_M+\left(u(-1),v(-1)\right)_M,
\end{split}
\]
which, after substitution of the last terms in \eqref{eq:L_form_2}, implies the equivalent form for $\calL$ given in~\eqref{eq:L_form}.
Next, to show \eqref{eq:calL_calA_bounds}, we note that first that the lower bound $\calL(v,v)\geq \norm{v}_{\calD}^2$ is immediate, whereas the upper bound $\abs{\calL(v,w)}\leq 2 \norm{v}_{\calD}\norm{w}_{\calD}$ follows from the application of the Cauchy--Schwarz inequality to \eqref{eq:L_form_2}.
Finally, it follows that the problem \eqref{eq:dg_mod} has a unique solution, momentarily denoted $\tilde{u}\in \Vp$.  Moreover, it is well-known that \eqref{eq:dg} has a unique solution $u\in \Vp$. Since the definition of $\calL$ and $\calG$ in~\eqref{eq:L_def} shows that $u$ is a solution of \eqref{eq:dg_mod}, we deduce from uniqueness that $\tilde{u}=u$. Therefore, the problems \eqref{eq:dg} and \eqref{eq:dg_mod} are equivalent.
\end{proof}

\begin{remark}\label{rem:Kreuzer_infsup}
Theorem~\ref{thm:L_form} can be viewed as part of the inf-sup analysis of the DG time-stepping method: defining the energy norm $\norm{\cdot}_{\calL}\coloneqq \sqrt{\calL (\cdot,\cdot)}$ and the norm $\norm{\cdot}_{\calX}\coloneqq\sqrt{\int_{-1}^1 \norm{\cdot}^2_A\dd s}$, we have
\begin{equation}\label{eq:inf_sup}
\begin{aligned}
\norm{u}_{\calL} = \sup_{v\in \Vp\setminus\{0\}} \frac{\calB(u,v)}{\norm{v}_{\calX}}, & & & \abs{\calB(u,v)}\leq \norm{u}_{\calL}\norm{v}_{\calX} & & & \forall\,u,\,v\in \Vp,
\end{aligned}
\end{equation}
where the first equality is attained by choosing the optimal test function $v=Pu$, since $\norm{Pu}_{\calX}=\norm{u}_{\calL}$. This observation highlights the fact that the operator $P$ represents the operator that gives the optimal test function in the inf-sup analysis of the DG time-stepping method. It is in this sense that our preconditioning strategy is directly motivated by the inf-sup theory of the DG time-stepping method.
\end{remark}

\subsection{Spectrally equivalent norm preconditioner}\label{sec:spect_equiv}
As shown by Theorem~\ref{thm:L_form}, the bilinear form $\calL$ is symmetric and positive definite. Therefore, the linear system~\eqref{eq:lin_alg_mod} can be solved iteratively by the preconditioned conjugate gradient~(PCG) algorithm \cite[]{Hestenes1952,MalekStrakos2015,Wathen2015}. In this section, we construct a spectrally equivalent and easily applicable preconditioner for $\bo L$, thereby leading to the robust and fast convergence of the PCG algorithm.
Recall that the bilinear form $\calL$ is spectrally equivalent to the bilinear form $\calD$ defined by \eqref{eq:calD_def}.
The first step in our construction of a preconditioner is to choose an advantageous temporal basis for $\Vp$ that block-diagonalizes the matrix $\bo D$ that represents $\calD$. In a second step, we construct an easily applicable preconditioner, denoted by $\bo H$, under this basis, and we apply Lemma~\ref{lem:pearson} to show robust spectral equivalence between $\bo H$ and $\bo L$.
 
\subsubsection{Definition of the basis} 
It follows from Remark~\ref{rem:radau_interpolant} that the symmetric bilinear form $(u,v)\tends \int_{-1}^1 (\calI u)^\prime (\calI v)^\prime\,\dd s$ is positive definite and thus defines an inner-product on $\calP_p$.
Therefore, there exists a set of linearly independent polynomial eigenfunctions $\{\varphi_j\}_{j=0}^p \subset \calP_p$ and corresponding real positive eigenvalues $\{\lambda_j\}_{j=0}^p\subset\R_{>0}$ such that
\begin{equation}\label{eq:eigenfunctions}
\begin{aligned}
\lambda_j \int_{-1}^1  (\calI \varphi_j)^\prime \, (\calI v)^\prime\, \dd s  = \int_{-1}^1 \varphi_j \,v\,\dd s  & & &\quad \forall\, v\in \calP_p,\;j=0,\dots,p.
\end{aligned}
\end{equation}
We note that the eigenvalues and eigenfunctions generally depend on $p$, and are not necessarily hierarchical.
The eigenfunctions are chosen to be orthonormalized:
\begin{equation}\label{eq:eigenfunctions_normalized}
\begin{aligned}
\int_{-1}^1  (\calI \varphi_j)^\prime \, (\calI \varphi_k)^\prime\, \dd s = \delta_{jk} & & &\forall \, 0\leq j,\,k\leq p.
\end{aligned}
\end{equation}
We provide a practical method for computing this basis along with the corresponding eigenvalues in section~\ref{sec:comp_eigenfunctions}.
The following result characterizes the distribution of the eigenvalues.
\begin{theorem}\label{thm:eigenvalue_distribution}
Let $p\geq 0$ be a nonnegative integer, and let $\lambda_0\geq \dots \geq \lambda_p$ be the eigenvalues of the generalized eigenvalue problem~\eqref{eq:eigenfunctions}. Then, there exists a positive constant $C$, independent of $j$ and $p$, such that
\begin{equation}\label{eq:eigenvalue_bound}
\begin{aligned}
\lambda_{j}  \leq C (j+1)^{-2}	& & & \forall\,  j=0,\dots, p,\; \forall\,p\geq 0.
\end{aligned}
\end{equation}
Moreover, there exists a constant $c$, independent of $p$, such that
\begin{equation}\label{eq:eigenvalue_lower_bound}
\lambda_0\geq \dots \geq \lambda_p \geq c \left(p+1\right)^{-4}.
\end{equation}
\end{theorem}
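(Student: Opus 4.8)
The plan is to read \eqref{eq:eigenfunctions} as a generalized eigenvalue problem for the pair consisting of the ``stiffness'' form $a(u,v)\coloneqq\int_{-1}^1(\calI u)^\prime(\calI v)^\prime\,\dd s$ and the ``mass'' form $m(u,v)\coloneqq\int_{-1}^1 uv\,\dd s$ on $\calP_p$, so that each $\lambda_j$ is the reciprocal of a Rayleigh quotient $a(v,v)/m(v,v)$ and the two bounds become, respectively, an upper and a lower frequency-resolved estimate between $a$ and $m$ obtained through the Courant--Fischer min-max principle. The single most useful tool is the identity $(\calI v)^\prime=v^\prime+v(-1)K_{-1}$ for all $v\in\calP_p$, which is a restatement of \eqref{eq:calI_properties_2}: the latter says that $(\calI v)^\prime-v^\prime$ represents the functional $w\mapsto v(-1)w(-1)$ in the $L^2$-inner product on $\calP_p$, so $K_{-1}$ is the $L^2(-1,1)$ reproducing kernel of $\calP_p$ evaluated at $s=-1$, with $\norm{K_{-1}}_{L^2}^2=K_{-1}(-1)=\sum_{k=0}^p\frac{2k+1}{2}=\frac12(p+1)^2$ and $\abs{v(-1)}=\abs{(K_{-1},v)_{L^2}}\le\norm{K_{-1}}_{L^2}\norm{v}_{L^2}$. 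In particular, on the codimension-one subspace $Z\coloneqq\{v\in\calP_p:v(-1)=0\}$ the reconstruction acts trivially on derivatives, $(\calI v)^\prime=v^\prime$, so that $a(v,v)=\norm{v^\prime}_{L^2}^2$ there.

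For the lower bound \eqref{eq:eigenvalue_lower_bound} I note that $\lambda_p$ is the smallest eigenvalue, hence $\lambda_p^{-1}=\max_{v}a(v,v)/m(v,v)$ is exactly the squared operator norm of $v\mapsto(\calI v)^\prime$ on $(\calP_p,\norm{\cdot}_{L^2})$; thus \eqref{eq:eigenvalue_lower_bound} is equivalent to the inverse-type estimate $\norm{(\calI v)^\prime}_{L^2}\le C(p+1)^2\norm{v}_{L^2}$. Using the identity above with the triangle inequality, this reduces to (i) the classical $L^2$ Markov inequality $\norm{v^\prime}_{L^2}\le C(p+1)^2\norm{v}_{L^2}$ for $v\in\calP_p$, and (ii) the endpoint bound $\abs{v(-1)}\,\norm{K_{-1}}_{L^2}\le\frac12(p+1)^2\norm{v}_{L^2}$ recorded above. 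Combining the two gives $\lambda_p\ge c(p+1)^{-4}$; the exponent $4$ is the square of the $(p+1)^2$ Markov/inverse factor, which is also why the lower and upper bounds carry different powers.

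For the upper bound \eqref{eq:eigenvalue_bound} I would use the characterization $\lambda_j=\min_{\Dim T=p+1-j}\max_{0\ne v\in T}m(v,v)/a(v,v)$ and exhibit a good trial space $T$. The idea is to linearise $\calI$ by working inside $Z$, where $a(v,v)=\norm{v^\prime}_{L^2}^2$, and then to resolve the frequency through the auxiliary polynomial Neumann problem $\int_{-1}^1 v^\prime w^\prime\,\dd s=\nu\int_{-1}^1 vw\,\dd s$ on $\calP_p$. Its eigenvalues $\nu_0\le\dots\le\nu_p$ are Ritz (Galerkin) values for the continuous Neumann problem on $(-1,1)$, so Courant--Fischer monotonicity gives $\nu_i\ge(i\pi/2)^2$. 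Taking $T$ to be the intersection of $Z$ with the span of the top eigenfunctions $\{\psi_i:\nu_i\ge\Lambda\}$ for $\Lambda\simeq(j\pi/2)^2$ produces a subspace of dimension at least $p+1-j$ on which $a(v,v)=\norm{v^\prime}_{L^2}^2\ge\Lambda\norm{v}_{L^2}^2$, whence $\lambda_j\le\Lambda^{-1}\le C(j+1)^{-2}$; the finitely many small indices $j$, where the one-dimensional loss from intersecting with $Z$ matters, are absorbed into the constant using the crude absolute bound $\lambda_0\le C$, which follows from the Poincar\'e inequality $\norm{\calI v}_{L^2}\le C\norm{(\calI v)^\prime}_{L^2}$ (valid since $\calI v(-1)=0$) together with the elementary lower bound $\norm{\calI v}_{L^2}\ge c\norm{v}_{L^2}$.

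The main obstacle is the frequency resolution in the upper bound: a bare Poincar\'e/coercivity estimate only yields $\lambda_j\le C$, and the genuine $(j+1)^{-2}$ decay must come from the spectrum of the derivative operator. Two points require care. First, the rank-one term $v(-1)K_{-1}$ prevents $a(v,v)$ from equalling $\norm{v^\prime}_{L^2}^2$ in general; I neutralise it by restricting to $Z$, at the cost of a harmless unit shift in the eigenvalue index that is recovered by adjusting $\Lambda$ (or absorbed into the constant for small $j$). Second, one needs the quantitative growth $\nu_i\gtrsim i^2$ of the discrete Neumann eigenvalues \emph{uniformly in $p$}; this is exactly where the comparison with the continuous operator through Ritz monotonicity does the work, and it is the step I would take the most care to state cleanly.
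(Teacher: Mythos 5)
Your proof is correct, and it splits the same way the paper does: the lower bound \eqref{eq:eigenvalue_lower_bound} via the inverse estimate $\norm{(\calI v)^\prime}_{L^2}\leq C(p+1)^2\norm{v}_{L^2}$, the upper bound \eqref{eq:eigenvalue_bound} via a frequency-resolved argument. For the lower bound your argument is essentially the paper's: your reproducing-kernel identity $(\calI v)^\prime = v^\prime + v(-1)K_{-1}$ with $\norm{K_{-1}}_{L^2}^2 = \frac{1}{2}(p+1)^2$ is a cleaner bookkeeping of the paper's combination of $\norm{v}_{L^\infty}\lesssim (p+1)\norm{v}_{L^2}$ with the explicit computation $\norm{(L_p-L_{p+1})^\prime}_{L^2}^2 = 2(p+1)^2$ in \eqref{eq:norm_derivative_legendre}; indeed $K_{-1}=\frac{(-1)^p}{2}(L_{p+1}-L_p)^\prime$, so the two are the same estimate. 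For the upper bound, however, you take a genuinely different route. The paper passes to the basis $\{\psi_k\}$ of Lemma~\ref{lem:psi_basis}, orthonormal for the stiffness form, observes that the mass matrix $T$ of \eqref{eq:T_def} is pentadiagonal with diagonal decay $T_{kk}\leq C(k+1)^{-2}$, and applies Weyl's perturbation theorem to the truncations $\hat{T}_q$, so that $\lambda_j\leq \norm{T-\hat T_{j-1}}_2\leq C j^{-2}$. You instead use Courant--Fischer directly, with trial spaces $Z\cap W_{j-1}$, where $Z=\{v: v(-1)=0\}$ (on which $\calI v = v$, so the stiffness form is exactly $\norm{v^\prime}_{L^2}^2$) and $W_{j-1}$ spans the top discrete Neumann eigenfunctions, importing the quantitative decay from Ritz monotonicity against the continuous Neumann spectrum, $\nu_i\geq (i\pi/2)^2$ uniformly in $p$. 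Both arguments are complete and give the same rate; yours avoids computing any mass-matrix entries and yields an explicit asymptotic constant, $\lambda_j\leq 4/(\pi^2(j-1)^2)$ for $j\geq 2$ (consistent with the sharpness seen in Figure~\ref{fig:eigenvalues}), while the paper's route builds exactly the matrices $T$, $Z$, $V$ that are reused for the implementation in section~\ref{sec:comp_eigenfunctions} and yields the projector estimate as a byproduct.

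The only assertion you leave unproved is the uniform bound $\norm{\calI v}_{L^2}\geq c\norm{v}_{L^2}$ used to get $\lambda_0\leq C$; it is true with $c=\frac12$, and since it is not in the paper you should supply the two lines. Writing $v = w + a_p L_p$ with $w\in\calP_{p-1}$ and $b\coloneqq \frac{(-1)^p v(-1)}{2}$, one has $\calI v = w + (a_p-b)L_p + b L_{p+1}$, so Legendre orthogonality gives $\norm{\calI v}_{L^2}^2 = \norm{w}_{L^2}^2 + (\alpha-\beta)^2 + \gamma^2$ with $\alpha = a_p\sqrt{2/(2p+1)}$, $\beta = b\sqrt{2/(2p+1)}$, $\gamma = b\sqrt{2/(2p+3)}$; since $\gamma^2\geq \beta^2/3$ and $\min_{t\in\R}\bigl((1-t)^2 + t^2/3\bigr) = \frac14$, it follows that $\norm{\calI v}_{L^2}^2 \geq \norm{w}_{L^2}^2 + \frac14\alpha^2 \geq \frac14 \norm{v}_{L^2}^2$. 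With that supplied, your proof is gap-free.
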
 
We leave the proof of Theorem~\ref{thm:eigenvalue_distribution} to Appendix~\ref{sec:eigenvalues} since it is based on some results presented in the later section~\ref{sec:comp_eigenfunctions}. Figure~\ref{fig:eigenvalues} shows that the orders of the bounds of Theorem~\ref{thm:eigenvalue_distribution} are sharp. The eigenfunctions for $p=4$ are shown in Figure~\ref{fig:eigenfunctions}.

\subsubsection{Construction of the norm preconditioner} It is advantageous to use the basis $\{\varphi_j\}_{j=0}^p$ in computations since in this basis, the dominant terms in the bilinear form $\calL$, namely those belonging to the bilinear form $\calD$, are represented by a block-diagonal matrix. Indeed, let $\bo D$ denote the matrix representation of the bilinear form $\calD$, defined in \eqref{eq:calD_def}, under the basis $\{\varphi_j\}_{j=0}^p$ of $\Vp$, where we recall the terminology from Remark~\ref{rem:basis}.
Then, in this basis, the matrix $\bo D$ has a simple block-diagonal structure:\begin{equation}
\bo D = \Diag\left\{ M A^{-1} M + \frac{\tau^2\, \lambda_j}{4} A \right\}_{j=0}^{p}.
\end{equation}
In other words, $\bo D$ is block-diagonal, with blocks $M A^{-1} M + \lambda_j \frac{\tau^2}{4} A$ for $j=0,\dots,p$.
Keeping in mind Lemma~\ref{lem:pearson}, it is then natural to define the norm preconditioner $\bo H$ by
\begin{equation}\label{eq:H_def}
\bo H \coloneqq \Diag\left\{ \left(M+\frac{\tau\,\sqrt{\lambda_j}}{2} A\right)\, A^{-1} \,\left(M+\frac{\tau\,\sqrt{\lambda_j}}{2} A\right) \right\}_{j=0}^p.
\end{equation}
The matrix $\bo H$ is symmetric positive definite, and its inverse is  trivially given by
\[
\bo H^{-1} = \Diag\left\{ \left(M+\frac{\tau\,\sqrt{\lambda_j}}{2} A\right)^{-1}\, A \, \left(M+\frac{\tau\,\sqrt{\lambda_j}}{2} A\right)^{-1}\right\}_{j=0}^p.
\]
We propose to use $\bo H$ as a preconditioner for the PCG algorithm applied to \eqref{eq:lin_alg_mod}.
Each PCG iteration requires the application of $\bo H^{-1}$, which comprises two applications of a solver for a weighted backward Euler step and one multiplication by $A$ per block.

We now give the central result of this work, which shows that $\bo L$ and $\bo H$ are spectrally equivalent with fully robust bounds.

\begin{theorem}\label{thm:L_H_equivalence}
Let $p\geq 0$, and let $\{ \varphi_j \}_{j=0}^p\subset \calP_p$ and $\{\lambda_j\}_{j=0}^p\subset \R_{>0}$ be the eigenfunctions and eigenvalues of \eqref{eq:eigenfunctions}. Let $\bo L$ be the matrix representation of $\calL$ in the basis $\{\varphi_j\}_{j=0}^p$. Then, for any symmetric positive definite matrices $A$ and $M$, any $\tau>0$ and any $p\geq 0$, we have
\begin{equation}\label{eq:L_H_equivalence}
\begin{aligned}
\frac{1}{2} \leq \frac{ \bo v^\top \bo L\, \bo v }{\bo v^\top \bo H \,\bo v} \leq 2  \quad \forall\, \bo v \in \bv^{p+1}\setminus\{0\}.
\end{aligned}	
\end{equation}
\end{theorem}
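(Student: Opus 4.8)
The plan is to prove \eqref{eq:L_H_equivalence} by chaining together two spectral equivalences: the global equivalence between $\bo L$ and $\bo D$ supplied by Theorem~\ref{thm:L_form}, and the block-wise equivalence between $\bo D$ and $\bo H$ supplied by the Pearson--Wathen result, Lemma~\ref{lem:pearson}. The conceptual point to keep in mind is that $\calL$ is a \emph{full} bilinear form, so its matrix $\bo L$ is not block-diagonal, whereas $\bo D$ and $\bo H$ share \emph{the same} block-diagonal structure under the eigenbasis $\{\varphi_j\}_{j=0}^p$. Consequently I would handle the first equivalence at the level of the global quadratic forms, and the second one block by block.

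For the first step, I would merely transcribe the bounds \eqref{eq:calL_calA_bounds} into the coordinate representation. Taking $w=v$ in the upper bound and combining with the lower bound gives $\norm{v}_{\calD}^2\leq\calL(v,v)\leq 2\norm{v}_{\calD}^2$ for every $v\in\Vp$; since $\calL$ is symmetric, its matrix $\bo L$ is symmetric and $\calL(v,v)=\bo v^\top\bo L\,\bo v$, while $\calD(v,v)=\bo v^\top\bo D\,\bo v$. Hence $\bo v^\top\bo D\,\bo v\leq\bo v^\top\bo L\,\bo v\leq 2\,\bo v^\top\bo D\,\bo v$ for all $\bo v\in\bv^{p+1}$, which is exactly the content of Theorem~\ref{thm:L_form} and requires no further computation.

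For the second step, I would use the block-diagonal form of $\bo D$, which follows from the normalisation \eqref{eq:eigenfunctions_normalized} together with the eigenrelation \eqref{eq:eigenfunctions} (these make all cross terms vanish and produce $\int_{-1}^1\varphi_j\varphi_k\,\dd s=\lambda_j\delta_{jk}$), so that $\bo D=\Diag\{MA^{-1}M+\tfrac{\tau^2\lambda_j}{4}A\}_{j=0}^p$, while $\bo H$ is block-diagonal with the matching blocks $(M+\tfrac{\tau\sqrt{\lambda_j}}{2}A)A^{-1}(M+\tfrac{\tau\sqrt{\lambda_j}}{2}A)$ by construction~\eqref{eq:H_def}. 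Applying Lemma~\ref{lem:pearson} to the $j$-th block with the choice $\mu=\tfrac{\tau^2\lambda_j}{4}\geq 0$, so that $\sqrt{\mu}=\tfrac{\tau\sqrt{\lambda_j}}{2}$, yields $\tfrac{1}{2}\,\bp v_j^\top H_j\,\bp v_j\leq\bp v_j^\top D_j\,\bp v_j\leq\bp v_j^\top H_j\,\bp v_j$ for each $j$, where $D_j$ and $H_j$ denote the respective blocks. Summing over $j$ then gives $\tfrac{1}{2}\,\bo v^\top\bo H\,\bo v\leq\bo v^\top\bo D\,\bo v\leq\bo v^\top\bo H\,\bo v$.

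Combining the two chains produces
\[
\tfrac{1}{2}\,\bo v^\top\bo H\,\bo v\;\leq\;\bo v^\top\bo D\,\bo v\;\leq\;\bo v^\top\bo L\,\bo v\;\leq\;2\,\bo v^\top\bo D\,\bo v\;\leq\;2\,\bo v^\top\bo H\,\bo v,
\]
which is precisely \eqref{eq:L_H_equivalence}. I do not expect a genuine obstacle here: the argument is essentially bookkeeping of constants, and the factor-$2$ equivalence of Theorem~\ref{thm:L_form} composes with the factor-$2$ equivalence of Lemma~\ref{lem:pearson} to give the condition-number bound $\kappa(\bo H^{-1}\bo L)\leq 4$. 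The only step demanding genuine care is the correct pairing, in each block, of the scalar $\mu=\tau^2\lambda_j/4$ with the $\sqrt{\mu}$-weighting built into $\bo H$, since it is exactly this pairing that lets Lemma~\ref{lem:pearson} apply block by block and makes the two equivalences compose cleanly.
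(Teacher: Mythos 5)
Your proposal is correct and follows essentially the same route as the paper's proof: the paper likewise combines the global equivalence $\bo v^\top\bo D\,\bo v\leq\bo v^\top\bo L\,\bo v\leq 2\,\bo v^\top\bo D\,\bo v$ from Theorem~\ref{thm:L_form} with the block-wise application of Lemma~\ref{lem:pearson} with $\mu=\lambda_j\tau^2/4$, exploiting the shared block-diagonal structure of $\bo D$ and $\bo H$ in the eigenbasis. Your added verification that $\int_{-1}^1\varphi_j\varphi_k\,\dd s=\lambda_j\delta_{jk}$ (via \eqref{eq:eigenfunctions} and \eqref{eq:eigenfunctions_normalized}) correctly justifies the block-diagonal form of $\bo D$ that the paper states without elaboration.
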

\begin{proof}
Theorem~\ref{thm:L_form}, in particular \eqref{eq:calL_calA_bounds}, shows that
\begin{equation}\label{eq:L_A_equivalence}
\begin{aligned}
1\leq \frac{\bo v^\top \bo L\, \bo v }{\bo v^\top \bo D \,\bo v} \leq 2  \quad \forall\,\bo v\in \bv^{p+1}\setminus\{0\}.
\end{aligned}
\end{equation}
Lemma~\ref{lem:pearson}, applied block-wise with $\mu = \lambda_j \tau^2/4$, implies that 
\begin{equation}\label{eq:A_H_equivalence}
\begin{aligned}
\frac{1}{2}\leq \frac{\bo v^\top \bo D\, \bo v }{\bo v^\top \bo H \,\bo v} \leq 1  \quad \forall\,\bo v\in \bv^{p+1}\setminus\{0\}.
\end{aligned}
\end{equation}
Therefore, \eqref{eq:L_A_equivalence} and \eqref{eq:A_H_equivalence} imply \eqref{eq:L_H_equivalence}.
\end{proof}

Theorem~\ref{thm:L_H_equivalence} immediately implies that  the condition number $\kappa$ of the preconditioned system, defined as the ratio of extremal eigenvalues of $\bo H^{-1} \bo L$, satisfies
\begin{equation}\label{eq:condition_number}
\kappa \leq 4.	
\end{equation}
Therefore, we may expect very fast convergence from the PCG algorithm for $\bo L \bo u = \bo g$ using the preconditioner $\bo H$. Indeed, for an initial guess $u_0\in \Vp$, the iterates $\{u_k\}_{k\geq 0}$ of the PCG algorithm satisfy the well-known bound \cite[]{Wathen2015}
\begin{equation}\label{eq:CG_bound}
\begin{aligned}
\frac{\norm{u - u_k}_{\calL}}{\norm{u-u_0}_{\calL}} \leq 2 \left( \frac{\sqrt{\kappa}-1}{\sqrt{\kappa}+1}\right)^k \leq \frac{2}{3^k} & & &\forall\,k\geq 0,
\end{aligned}
\end{equation}
where we recall the energy norm $\norm{\cdot}_{\calL} = \sqrt{\calL(\cdot,\cdot)}$.
The fact that the condition number $\kappa \leq 4$ shows that the left preconditioner $\bo P^\top$ and the norm preconditioner $\bo H$ are very effective at preconditioning the original system matrix $\bo B$, despite $\bo B$ being nonsymmetric and poorly conditioned.
This is a key aspect of the efficiency of the proposed preconditioners.
Furthermore, we highlight that \eqref{eq:condition_number} and \eqref{eq:CG_bound} are valid for any time-step size~$\tau$, any polynomial degree $p$, and any symmetric positive definite matrices $M$ and $A$. Thus the preconditioners are fully robust with respect to all discretization and problem parameters.

\begin{remark}
The fact that the energy norm $\norm{\cdot}_{\calL}$ appears in the bound \eqref{eq:CG_bound} is advantageous in practice. Since the endpoint value of the solution at $s=1$ serves as initial datum for the next time-step, it is beneficial to be able to control the accuracy to which it is computed. For example, \eqref{eq:L_form} shows that the energy norm controls the value at $s=1$ in the $M$-norm, which is the natural norm for this quantity of interest.
We note that the factor of $\tau$ appearing alongside $\norm{v(1)}_M^2$ in the energy norm can be scaled out as it appears in both the numerator and denominator of \eqref{eq:CG_bound}. Therefore \eqref{eq:CG_bound} is a guaranteed  convergence rate for this quantity of interest in the physically relevant norm.
\end{remark}
 
\begin{figure}
\begin{center}
\includegraphics{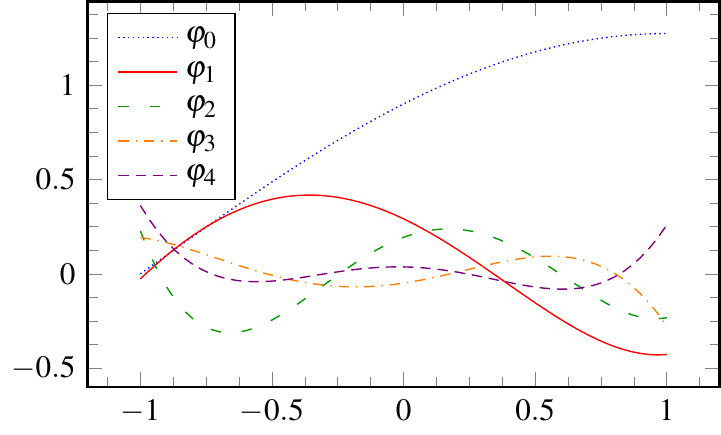}
\caption{Eigenfunctions $\{\varphi_j\}_{j=0}^p$ defined by \eqref{eq:eigenfunctions} and \eqref{eq:eigenfunctions_normalized}, computed for $p=4$ by the method of section~{\upshape\ref{sec:comp_eigenfunctions}} and ordered by decreasing eigenvalue $\lambda_j\geq \lambda_{j+1}$. Note that the $j$-th eigenfunction $\varphi_j$ need not be of degree at most~$j$. Furthermore, the set of eigenfunctions generally depends on $p$, although they are independent of $A$, $M$ and $\tau$.}
\label{fig:eigenfunctions}
\includegraphics{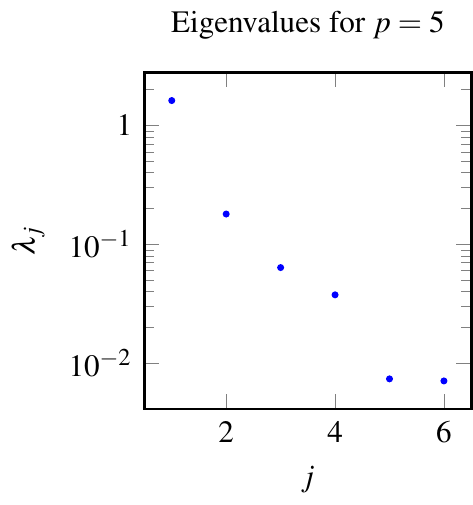}
\includegraphics{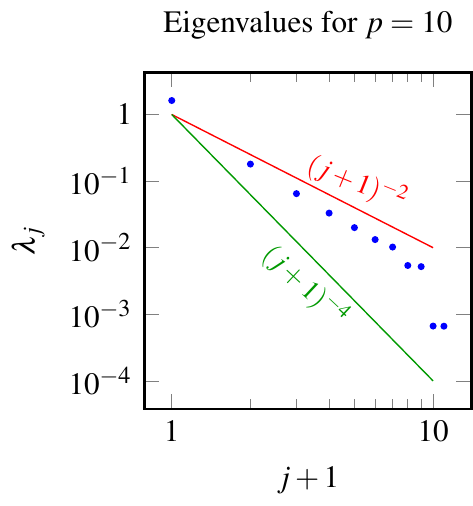}
\includegraphics{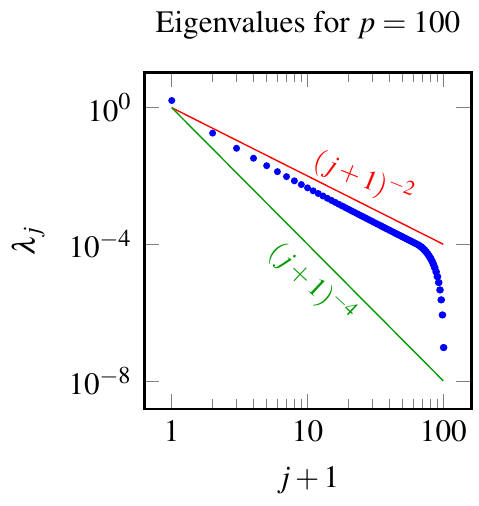}
\includegraphics{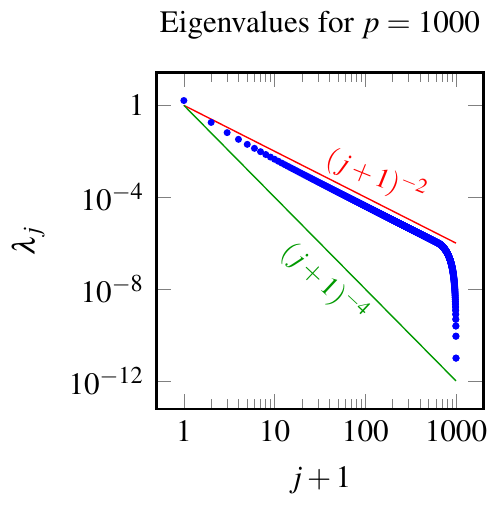}
\caption{Eigenvalues $\{\lambda_j\}_{j=0}^p$ computed by the method of section~{\upshape\ref{sec:comp_eigenfunctions}} and arranged in decreasing order, for $p=5$, $10$, $100$ and $100$. All plots are on a logarithmic scale, except for $p=5$ which is given on a semilogarithmic scale. It appears that the bounds of Theorem~{\upshape\ref{thm:eigenvalue_distribution}} are sharp.}
\label{fig:eigenvalues}
\end{center}
\end{figure}

\section{Implementation}\label{sec:implementation}

\subsection{Computation of eigenfunctions and eigenvalues}\label{sec:comp_eigenfunctions}
The algorithm requires the computation of the eigenfunctions $\{\varphi_j\}_{j=0}^p$ and eigenvalues $\{\lambda_j\}_{j=0}^p$ defined in \eqref{eq:eigenfunctions}, which involves solving a symmetric positive definite eigenvalue problem of dimension~$p+1$. This poses little difficulty, as in practice $p$ is usually small, especially in comparison to $\Dim\bv$. Since this step only depends on $p$ and is fully independent of $A$, $M$ and $\tau$, it can be pre-computed to very high accuracy.
We now show how to assemble this eigenvalue problem in a form where it can be solved numerically. Since the cases $p=0$ and $p=1$ can be easily computed by hand, we present a general method for $p\geq 2$.
In the following, we will use the following identity for the Legendre polynomials $\{L_k\}_{k=0}^p$, see \cite[Sec.~8.914]{TOISP}:
\begin{equation}\label{eq:legendre_derivative_identity}
\begin{aligned}
L_k  = \frac{L_{k+1}^\prime - L_{k-1}^\prime}{2k+1}	 & & &\forall\, k\geq 0.
\end{aligned}
\end{equation}
\begin{lemma}\label{lem:psi_basis}
Let $p\geq 2$ be a nonnegative integer. Define the polynomials
\begin{equation}\label{eq:psi_definition}
\begin{aligned}
\psi_0 \coloneqq \frac{1}{\sqrt{2}}(L_1 + L_0) &&& \psi_p \coloneqq \frac{L_p-L_{p-1}}{\sqrt{4p+2}},\\
\psi_k \coloneqq \frac{L_{k+1}-L_{k-1}}{\sqrt{4k+2}}, & & & k=1,\dots,p-1.
\end{aligned}
\end{equation}
Then, there holds
\begin{equation}\label{eq:inverse_calI}
\begin{aligned}
(\calI \psi_k)^\prime  = \sqrt{k+\tfrac{1}{2}}\, L_k & & & \forall\,k=0,\dots,p.
\end{aligned}
\end{equation}
Therefore, we have, for each $0\leq k,\,j\leq p$,
\begin{equation}\label{eq:psi_orthogonality}
\int_{-1}^1 (\calI \psi_k)^\prime (\calI \psi_j)^\prime \,\dd s = \delta_{kj}.
\end{equation}
\end{lemma}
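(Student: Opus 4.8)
The plan is to prove the pointwise identity \eqref{eq:inverse_calI} directly, and then to read off the orthogonality \eqref{eq:psi_orthogonality} from the $L^2$-orthogonality of the Legendre polynomials. The decisive structural observation is that the reconstruction operator $\calI$ from \eqref{eq:calI_definition} modifies $v$ only through the term proportional to $v(-1)$; hence $\calI$ acts as the identity on any $\psi_k$ that vanishes at $s=-1$, and the correction term is active only for the single index at which $\psi_k(-1)\neq 0$.

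First I would compute the endpoint values using $L_m(-1)=(-1)^m$. For the interior indices $k=1,\dots,p-1$, the definition \eqref{eq:psi_definition} gives $\psi_k(-1)=\big((-1)^{k+1}-(-1)^{k-1}\big)/\sqrt{4k+2}=0$, and similarly $\psi_0(-1)=(L_1(-1)+L_0(-1))/\sqrt2=0$. Thus $\calI\psi_k=\psi_k$ for all $k=0,\dots,p-1$, so $(\calI\psi_k)'=\psi_k'$. Differentiating and applying the Legendre identity \eqref{eq:legendre_derivative_identity} in the form $L_{k+1}'-L_{k-1}'=(2k+1)L_k$ then yields
\[
(\calI\psi_k)' = \frac{L_{k+1}'-L_{k-1}'}{\sqrt{4k+2}} = \frac{(2k+1)L_k}{\sqrt{2(2k+1)}} = \sqrt{k+\tfrac12}\,L_k
\]
for $k=1,\dots,p-1$; the case $k=0$ follows at once from $L_0'=0$ and $L_1'=1$, giving $(\calI\psi_0)'=1/\sqrt2=\sqrt{\tfrac12}\,L_0$.

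The main obstacle is the top index $k=p$, where the correction term in $\calI$ genuinely contributes. Here $\psi_p(-1)=\big((-1)^p-(-1)^{p-1}\big)/\sqrt{4p+2}=2(-1)^p/\sqrt{4p+2}\neq0$. Substituting into \eqref{eq:calI_definition}, the prefactors combine cleanly as $\psi_p(-1)\,(-1)^p/2=1/\sqrt{4p+2}$, so the $L_p$ contributions cancel and the correction effectively promotes $L_p$ to $L_{p+1}$:
\[
\calI\psi_p = \frac{L_p-L_{p-1}}{\sqrt{4p+2}} - \frac{L_p-L_{p+1}}{\sqrt{4p+2}} = \frac{L_{p+1}-L_{p-1}}{\sqrt{4p+2}}.
\]
Differentiating and invoking \eqref{eq:legendre_derivative_identity} once more gives $(\calI\psi_p)'=(2p+1)L_p/\sqrt{4p+2}=\sqrt{p+\tfrac12}\,L_p$, which completes \eqref{eq:inverse_calI}. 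This telescoping, by which $\calI$ restores the missing $L_{p+1}$ term so that the top basis function matches the uniform pattern of the interior ones, is the only place where the specific form of $\calI$ is used, and is where I would be most careful with the bookkeeping of signs.

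Finally, with \eqref{eq:inverse_calI} established, the orthogonality \eqref{eq:psi_orthogonality} is immediate: using the Legendre orthogonality relation $\int_{-1}^1 L_k L_j\,\dd s = \tfrac{2}{2k+1}\delta_{kj}$ gives
\[
\int_{-1}^1 (\calI\psi_k)'(\calI\psi_j)'\,\dd s = \sqrt{\big(k+\tfrac12\big)\big(j+\tfrac12\big)}\int_{-1}^1 L_k L_j\,\dd s = \delta_{kj},
\]
since for $k=j$ the normalization $\big(k+\tfrac12\big)\tfrac{2}{2k+1}=1$. This is precisely \eqref{eq:psi_orthogonality}, establishing that $\{\psi_k\}_{k=0}^p$ is orthonormal for the inner product induced by $(u,v)\mapsto\int_{-1}^1(\calI u)'(\calI v)'\,\dd s$.
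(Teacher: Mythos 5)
Your proof is correct and follows essentially the same route as the paper's: you verify $\psi_k(-1)=0$ so that $\calI\psi_k=\psi_k$ for $k=0,\dots,p-1$, handle $k=p$ by the same telescoping computation $\calI(L_p-L_{p-1})=L_{p+1}-L_{p-1}$, and conclude via the identity \eqref{eq:legendre_derivative_identity} and Legendre orthogonality. No gaps; the sign bookkeeping at $k=p$ is exactly right.
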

\begin{proof}
Consider the case $k=0$: $\psi_0 (-1) = 0$, and $L_1^\prime = L_0$. Therefore $\calI \psi_0 = \psi_0$, and thus $(\calI \psi_0)^\prime = L_0/\sqrt{2}$, which is \eqref{eq:inverse_calI} for $k=0$.
Now consider the general case $k=1,\dots,p-1$. Since $L_j(-1) = (-1)^j$ for all $j\geq 0$, we have $\psi_k(-1)=0$ and thus $\calI \psi_k = \psi_k$. Now, the identity \eqref{eq:legendre_derivative_identity} implies that
\begin{equation}\label{eq:inverse_calI_0}
(\calI \psi_k)^\prime = \frac{2k+1}{\sqrt{4k+2}} L_k = \sqrt{k+\tfrac{1}{2}}\,L_k,
\end{equation}
thus verifying \eqref{eq:inverse_calI} for $k=1,\dots,p-1$.
For the case $k=p$, we use the fact that $L_p(-1)-L_{p-1}(-1)=2(-1)^p$ to compute
\begin{equation}
\calI (L_p-L_{p-1}) = L_p - L_{p-1} - \frac{2(-1)^p(-1)^p}{2}(L_p-L_{p+1}) = L_{p+1} - L_{p-1}.
\end{equation}
Therefore, similarly to \eqref{eq:inverse_calI_0}, the identity \eqref{eq:inverse_calI} for $k=p$ follows from \eqref{eq:legendre_derivative_identity}.
\end{proof}

Define the matrix $Z\in \R^{(p+1)\times(p+1)}$ such that $\psi_{k}=\sum_{j=0}^p Z_{kj} L_j$ as in \eqref{eq:psi_definition}. Define the diagonal matrix $D \coloneqq \Diag\{ 2/(2j+1) \}_{j=0}^p$, which represents the $L^2$-inner product in the Legendre polynomial basis. 
Define the matrix $T\in \R^{(p+1)\times (p+1)}$ by 
\begin{equation}\label{eq:T_def}
\begin{aligned}
T_{kj} \coloneqq \int_{-1}^1 \psi_k\,\psi_j\,\dd s, & & & k,\,j=0,\dots,p.
\end{aligned}
\end{equation}
Note that $T = Z D Z^\top$.
It follows from \eqref{eq:psi_orthogonality} that the eigenvalue problem \eqref{eq:eigenfunctions} can be expressed as: find $V \in \R^{(p+1)\times(p+1)}$ and $\{\lambda_j\}_{j=0}^p$ such that
\begin{equation}\label{eq:discrete_eigenvalue}
\begin{aligned}
T V =  V \Diag\{\lambda_j\}_{j=0}^p, & & & V V^\top = V^\top V = \Id.
\end{aligned}
\end{equation}
The eigenvalue problem \eqref{eq:discrete_eigenvalue} can thus be solved numerically by standard eigenvalue solvers.
The eigenfunctions $\{\varphi_j\}_{j=0}^p$ can then be recovered as
\begin{equation}\label{eq:eigenfunctions_V_def}
\begin{aligned}
\varphi_j(s) = \sum_{k=0}^p V_{kj} \,\psi_{k}(s) = \sum_{k=0}^p Q_{kj} \,L_k(s), & & & Q \coloneqq Z^\top V.
\end{aligned}
\end{equation}

\subsection{Implementation of preconditioners} In order to compute the action of the left-preconditioner $\bo P^\top$, we use the matrix representation of the mapping $v\mapsto (\calI v)^\prime$ in the basis given by $\{\varphi_j\}_{j=0}^p$. Thus we need to find the matrix $K$ such that $(\calI \varphi_k)^\prime = \sum_{j=0}^p K_{kj}\, \varphi_j$ for all $k=0,\dots,p$. We show below that this is easily computed.
Inverting \eqref{eq:legendre_derivative_identity} by induction yields\begin{equation}\label{eq:derivative_legendre}
\begin{aligned}
L_k^\prime = \sum_{j=0}^{k-1} \left(1-(-1)^{k-j}\right)\left(j+\tfrac{1}{2}\right) L_j & & & \forall \,k\geq 0.
\end{aligned}
\end{equation}
Therefore, we have
\begin{equation}\label{eq:calI_legendre}
\begin{aligned}
\left(\calI L_k\right)^\prime = \sum_{j=0}^p K^*_{kj} \,L_j & & & \forall\,k=0,\dots,p,
\end{aligned}
\end{equation}
where, after some calculation, it is found that the matrix $K^* \in \R^{(p+1)\times (p+1)}$ can be defined in terms of $\tilde{K}\in\R^{(p+1)\times (p+1)}$, $x\in \R^{p+1}$, and $y \in \R^{p+1}$ by
\begin{align*}
K^* \coloneqq \tilde{K} - x\,y^\top, & &  & \tilde{K}_{kj}\coloneqq\begin{cases}
\left(1-(-1)^{k-j}\right)\left(j+\tfrac{1}{2}\right) & \text{if }j\leq k,\\
0 & \text{otherwise},
\end{cases}\\
x_j \coloneqq (-1)^j, & & & y_{k} \coloneqq (-1)^{1-k}\left(k+\tfrac{1}{2}\right).
\end{align*}

\begin{lemma}
Let $p\geq 2$ be an integer, and let $\{\varphi_j\}_{j=0}^p$ be the eigenfunctions defined by \eqref{eq:eigenfunctions_V_def}. Then, for each $k=0,\dots,p$, there holds
\begin{equation}
\begin{aligned}
(\calI \varphi_k)^\prime = \sum_{j=0}^p K_{kj}\, \varphi_j, & & & K \coloneqq V^\top D^{-1/2} K^* D^{1/2} V,
\end{aligned}
\end{equation}
where $V$ is as in \eqref{eq:discrete_eigenvalue}, the matrix $K^*$ is as in \eqref{eq:calI_legendre}, and where $D = \Diag\{ 2/(2j+1) \}_{j=0}^p$.
\end{lemma}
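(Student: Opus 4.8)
Throughout, write $\mathcal{R}\colon \calP_p\to\calP_p$ for the linear map $\mathcal{R}v\coloneqq(\calI v)^\prime$ (note that $\calI v\in\calP_{p+1}$, so $\mathcal{R}v\in\calP_p$). With this notation, the matrix $K$ is precisely the representation of $\mathcal{R}$ in the eigenfunction basis $\{\varphi_j\}_{j=0}^p$, since the assertion reads $\mathcal{R}\varphi_k=\sum_j K_{kj}\varphi_j$, while \eqref{eq:calI_legendre} says that $K^*$ is the representation of the \emph{same} map in the Legendre basis $\{L_k\}_{k=0}^p$, both in the row convention $\mathcal{R}b_k=\sum_j(\cdot)_{kj}\,b_j$. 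The plan is therefore to transport $K^*$ to $K$ through three successive changes of basis: first to the $L^2$-normalized Legendre polynomials $\widehat{L}_k\coloneqq\sqrt{k+\tfrac{1}{2}}\,L_k$, then to the basis $\{\psi_k\}_{k=0}^p$ of Lemma~\ref{lem:psi_basis}, and finally to $\{\varphi_j\}_{j=0}^p$.

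The first step is immediate: since $\widehat{L}_k=\sqrt{k+\tfrac{1}{2}}\,L_k$ is a diagonal rescaling of the Legendre basis and $D^{-1/2}=\Diag\{\sqrt{k+\tfrac{1}{2}}\}_{k=0}^p$, a one-line computation from \eqref{eq:calI_legendre} shows that the matrix of $\mathcal{R}$ in the basis $\{\widehat{L}_k\}$ is exactly $\widehat{K}^*\coloneqq D^{-1/2}K^*D^{1/2}$. The second step is the crux, and it is where Lemma~\ref{lem:psi_basis} enters decisively: identity \eqref{eq:inverse_calI} states precisely that $\mathcal{R}\psi_k=\widehat{L}_k$, so that $\mathcal{R}$ carries the basis $\{\psi_k\}$ onto $\{\widehat{L}_k\}$ index-by-index. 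I claim this forces $\mathcal{R}$ to have the \emph{same} matrix $\widehat{K}^*$ in the basis $\{\psi_k\}$. Indeed, applying $\mathcal{R}$ a second time and using $\widehat{L}_j=\mathcal{R}\psi_j$ together with linearity gives, for each $k$,
\[
\mathcal{R}(\mathcal{R}\psi_k)=\mathcal{R}\widehat{L}_k=\sum_j\widehat{K}^*_{kj}\,\widehat{L}_j=\mathcal{R}\Big(\sum_j\widehat{K}^*_{kj}\,\psi_j\Big),
\]
and since $\mathcal{R}$ is injective on $\calP_p$ (Remark~\ref{rem:radau_interpolant}), we may cancel the outer $\mathcal{R}$ to obtain $\mathcal{R}\psi_k=\sum_j\widehat{K}^*_{kj}\,\psi_j$, as claimed. (Alternatively, the same conclusion follows more directly by applying $\mathcal{R}$ to $\psi_k=\sum_j Z_{kj}L_j$ and comparing with \eqref{eq:inverse_calI}, which yields the identity $ZK^*=D^{-1/2}$.)

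For the third step, recall that both $\{\psi_k\}$ and $\{\varphi_j\}$ are orthonormal for the inner product $(u,v)\mapsto\int_{-1}^1(\calI u)^\prime(\calI v)^\prime\,\dd s$, by \eqref{eq:psi_orthogonality} and \eqref{eq:eigenfunctions_normalized} respectively; hence the change-of-basis matrix $V$ of \eqref{eq:discrete_eigenvalue} relating them through $\varphi_k=\sum_m V_{mk}\psi_m$ (see \eqref{eq:eigenfunctions_V_def}) is orthogonal, $VV^\top=V^\top V=\Id$. Substituting this expansion into $\mathcal{R}\varphi_k$, expanding each $\mathcal{R}\psi_m$ in the $\{\psi_n\}$ basis via the second step, and re-expanding $\psi_n$ in the $\{\varphi_j\}$ basis, collapses to the orthogonal conjugation $K=V^\top\widehat{K}^*V=V^\top D^{-1/2}K^*D^{1/2}V$, which is the assertion. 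I expect the only genuine obstacle to be bookkeeping rather than mathematics: one must keep the row convention $\mathcal{R}b_k=\sum_j(\cdot)_{kj}b_j$ consistent throughout so that $V$ enters as $V^\top(\cdot)V$ rather than $V(\cdot)V^\top$, and one must correctly place the two diagonal factors $D^{\pm1/2}$ arising from the Legendre normalization.
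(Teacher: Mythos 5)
Your proof is correct, and its overall skeleton---representing the map $v\mapsto(\calI v)^\prime$ in successive bases (Legendre, normalized Legendre, $\{\psi_k\}$, $\{\varphi_j\}$) and conjugating by the change-of-basis matrices---matches the paper's computation, with the same ingredients: \eqref{eq:calI_legendre} for $K^*$, Lemma~\ref{lem:psi_basis} as the pivot, and the orthogonality of $V$ from \eqref{eq:discrete_eigenvalue} for the final conjugation. The one genuinely different device is your middle step. The paper passes from the Legendre basis to the $\{\psi_k\}$ basis by exploiting the orthonormality \eqref{eq:psi_orthogonality}: it expands
\[
L_j=\sum_{r=0}^p\left(\int_{-1}^1(\calI L_j)^\prime(\calI\psi_r)^\prime\,\dd s\right)\psi_r=\sum_{r=0}^p\left(K^*D^{1/2}\right)_{jr}\psi_r,
\]
and then substitutes this into $(\calI\varphi_k)^\prime=\sum_j V_{jk}(j+\tfrac12)^{1/2}L_j$. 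You instead observe that \eqref{eq:inverse_calI} carries $\psi_k$ to $\sqrt{k+\tfrac12}\,L_k$ index-by-index, and invoke the injectivity of $v\mapsto(\calI v)^\prime$ on $\calP_p$ (Remark~\ref{rem:radau_interpolant}) to cancel one application and conclude that this operator has the same matrix $D^{-1/2}K^*D^{1/2}$ in the $\{\psi_k\}$ basis as in the normalized Legendre basis; your parenthetical identity $ZK^*=D^{-1/2}$ is an equivalent formulation (since it gives $Z^{-1}=K^*D^{1/2}$) and essentially recovers the paper's expansion without the inner product. Both routes are rigorous: the paper's is a direct two-line computation from orthonormality, whereas yours isolates the structural reason the formula is a conjugation---$K$ and $K^*$ represent one operator in two bases---at the cost of invoking injectivity, a fact the paper's proof never needs. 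Your closing caution about conventions is well placed but correctly resolved: with $(\calI\varphi_k)^\prime=\sum_j K_{kj}\varphi_j$ throughout, the factors $V^\top(\cdot)V$ and $D^{\mp1/2}$ land where the lemma asserts, as one checks from $\varphi_k=\sum_m V_{mk}\psi_m$ and $\psi_n=\sum_j V_{nj}\varphi_j$.
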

\begin{proof}
The orthogonality of the matrix $V$ in \eqref{eq:discrete_eigenvalue} implies that $\psi_k = \sum_{m=0}^p V_{km} \varphi_m$.
Since the basis $\{\psi_k\}_{k=0}^p$ is orthonormal in the inner product of \eqref{eq:psi_orthogonality}, there holds
\[
\begin{split}
L_j &= \sum_{r=0}^p \left(\int_{-1}^1 (\calI L_j)^\prime (\calI \psi_r)^\prime\,\dd s \right) \psi_r = \sum_{r=0}^p \sum_{m=0}^p \left(K^*_{jm}  (r+\tfrac{1}{2})^{1/2} \int_{-1}^1 L_m \, L_r \,\dd s \right) \psi_r  \\
& = \sum_{r=0}^p K^*_{jr} \left(r+\tfrac{1}{2}\right)^{-1/2} \psi_r = \sum_{r=0}^p (K^* D^{1/2})_{jr} \psi_r = \sum_{m=0}^p (K^* D^{1/2} V)_{jm} \varphi_m,
\end{split}
\]
where we have used \eqref{eq:inverse_calI} and \eqref{eq:calI_legendre} in the first line.
Therefore, we compute
\[
(\calI \varphi_k)^\prime =\sum_{j=0}^p V_{jk} (\calI \psi_j)^\prime = \sum_{j=0}^p V_{jk} (j+\tfrac{1}{2})^{1/2} L_j 
= \sum_{j=0}^p \sum_{m=0}^p V_{jk} ( D^{-1/2} K^* D^{1/2} V)_{jm} \varphi_m,
\]
which completes the proof.
\end{proof}

We now show how the matrix $K$ is used in applying the preconditioner~$\bo P^\top$.
For $v = \sum_{k=0}^p \bp v_k \,\varphi_k$, there holds
\begin{gather*}
Pv = \sum_{j=0}^p \left( A^{-1} M \bp w_j + \frac{\tau}{2} \bp v_j \right) \varphi_j, \quad \bp w_j \coloneqq \sum_{k=0}^p K_{kj} \bp v_k, \\
\calF(Pv) = \sum_{j=0}^p \bp f_j^\top \left( A^{-1} M \bp w_j +\frac{\tau}{2} \bp v_j \right)=\sum_{k=0}^p \left( M A^{-1}\left( \sum_{j=0}^p K_{kj}\, \bp f_j\right) + \frac{\tau}{2} \bp f_k \right)^\top \bp v_k.
\end{gather*}
Therefore, $ \bo g \coloneqq \bo P^\top \bo f$, where $\bo g = (\bp g_0,\dots,\bp g_p)$, can be computed componentwise by
\begin{equation}
\begin{aligned}
\bp g_k = M A^{-1}\left( \sum_{j=0}^p K_{kj} \,\bp f_j\right) + \frac{\tau}{2} \bp f_k, & & & k=0,\dots,p.
\end{aligned}
\end{equation}
The action of $\bo P^\top$ requires the solution of $p+1$ independent systems with matrix $A$, which can be performed in parallel.
If we ignore communication costs, then the cost of computing $\bo g$ is independent of the polynomial degree $p$ on a parallel machine with sufficiently many computing nodes.

After application of the preconditioner $\bo P^\top$, the linear system has the form
\[
\bo L \bo u = \bo g,
\]
where $\bo L = \bo P^\top \bo B$, which can be solved by the PCG algorithm with preconditioner $\bo H$, as suggested in section~\ref{sec:preconditioners}. The PCG algorithm requires the action of the matrices $\bo L$  and $\bo H^{-1}$ at each iteration. There are two ways to implement the action of $\bo L$, the first being the application of $\bo B$ followed by $\bo P^\top$.
The downside of this approach is that it leads to greater communication costs on a distributed memory parallel machine where each node holds in memory only a few coefficients of $\bo u$: the matrices $\bo B$ and $\bo P^\top$ are generally block-dense, so all vector components must be exchanged between all computational nodes for both steps.
The second approach is to use \eqref{eq:L_form}, which shows that $\bo L$ can be expressed as a diagonal matrix plus a ``rank-two'' term: in the basis $\{\varphi_j\}_{j=0}^p$, we have
\begin{equation}
\bo L = \Diag\left\{ M A^{-1} M +\frac{\tau^2 \lambda_j}{4} A  \right\}_{j=0}^p + \frac{\tau}{2} \bo q_1 \bo q_1^\top \otimes M + \frac{\tau}{2} \bo q_{-1} \bo q_{-1}^\top \otimes M,
\end{equation}
where $\bo q_{\pm 1} \coloneqq \left(q_0(\pm 1),\dots,q_p(\pm 1)\right)$ are the vectors of endpoint values of the $\varphi_j$. Therefore, we may compute $\bo L \bo u$, with $\bo u = (\bp u_0,\dots, \bp u_p)$, as follows:
\begin{subequations}\label{eq:compute_L}
\begin{gather}
\bp w_j \coloneqq M \bp u_j,\quad j=0,\dots,p,  \qquad \bp z_{\pm 1} \coloneqq \sum_{j=0}^p \varphi_j(\pm 1) \bp w_j, \\
(\bo L \bo u)_j = M A^{-1} \bp w_j + \frac{\tau^2 \lambda_j}{4} A \bp u_j + \frac{\tau}{2} \varphi_j(1) \bp z_1 + \frac{\tau}{2}  \varphi_j(-1) \bp z_{-1}. 
\end{gather}
\end{subequations}
This approach requires the same number of matrix-vector products as the first approach outlined above, but the communication costs are greatly reduced. Indeed, it is seen that the above procedure requires the parallel computation of the $\bp w_j$, which are then gathered and reduced by one or two nodes tasked with computing $\bp z_{\pm 1}$. The results are then broadcast back to all nodes, after which all subsequent computations can be performed in parallel.

Finally, the application of $\bo H^{-1}$ is trivially parallel, as it requires only that each node solves two linear systems involving the matrix $M + \tau \sqrt{\lambda_j}/2 A$, and one application of $A$. Theorem~\ref{thm:eigenvalue_distribution} shows that the eigenvalues remain uniformly bounded from above, and that the smaller eigenvalues approach zero as $p$ increases. Therefore, for large $p$ and $j$, the matrix $M + \tau \sqrt{\lambda_j}/2 A$ becomes increasingly similar to $M$, implying that these systems will become increasingly easy to solve in many applications.
 
\section{Numerical experiments}\label{sec:num_exp}

\subsection{Condition numbers}\label{sec:num_exp_1}
In this experiment, we study the dependence of the condition numbers $\kappa$ of the preconditioned system $\bo H^{-1} \bo L$ on the problem and discretization parameters. In order to compute accurately the condition numbers, we consider first a one-dimensional problem.
Let $M$ and $A$ be the mass and stiffness matrices obtained by applying piecewise-linear continuous finite elements on a uniform subdivision of the domain $\Om=(0,1)$ into elements of size $h=2^{-k}$, $k=5,\dots,10$. We note that $M$ and $A$ thus depend on $h$, although this is left implicit in our notation. For this experiment, we use direct solvers to implement the action of $A^{-1}$ and $(M + \tau \sqrt{\lambda_j}/2 A)^{-1}$. 

Table~\ref{tab:tau_dependence} shows the condition numbers $\kappa$ as a function of $\tau$ for a wide range of parameters. For this experiment, we set $p=2$ and $h=2^{-5}$. It is found that for either very large or very small $\tau$, $\kappa\tends1$. This is easily explained by the fact that\begin{equation}\label{eq:tau_asymptotics}
\lim_{\tau\tends \infty} 	\frac{ \bo v^\top \bo L\, \bo v }{\bo v^\top \bo H \,\bo v} = \lim_{\tau\tends 0} \frac{ \bo v^\top \bo L\, \bo v }{\bo v^\top \bo H \,\bo v} = 1.
\end{equation}
Therefore the condition number $\kappa\tends 1$ as $\tau\tends 0 $ or $\tau \tends \infty$. Thus the maximal condition number is found for intermediate values of the time-step size $\tau$, although in all cases it satisfies the theoretical bound $\kappa\leq 4$ as shown in~\eqref{eq:condition_number}.
\begin{table}[b]
\begin{center}
\begin{tabular}{| c | c c c c c c c c |}
\hline  & & & & & & & & \vspace{-2.5ex} \\
$\tau$ &  $10^{-6}$ & $10^{-5}$ & $10^{-4}$ & $10^{-3}$ & $10^{-2}$ & $10^{-1}$ & $1$ & $10$\\ \hline 
$\kappa$ & $1.011$ & $1.103$ & $1.749$ & $2.031$ & $2.028$ & $2.019$ & $1.693$ & $1.089$ \\ \hline 
\end{tabular}\vspace{1ex}
\caption{Dependence of the condition number $\kappa$ of $\bo H^{-1}\bo L$ on a wide range of values for the time-step size $\tau$. Observe that for either very large or very small $\tau$, the condition number is asymptotically $1$, in agreement with \eqref{eq:tau_asymptotics}. This explains the observation that the maximum condition number is observed for intermediate values.}
\label{tab:tau_dependence}
\end{center}
\end{table}
\begin{table}
\begin{center}
\begin{tabular}{| c | c c c c c c |}
\hline  & & & & & &  \vspace{-2.5ex} \\
$ \kappa $ & $h=2^{-5}$ & $h=2^{-6}$ & $h=2^{-7}$ & $h=2^{-8}$ & $h=2^{-9}$ & $h=2^{-10}$ \\ \hline 
$p=1$ & 1.318 & 1.319 & 1.319 & 1.319 & 1.319 & 1.319\\
$p=2$ & 2.019 & 2.019 & 2.019 & 2.019 & 2.019 &2.019\\
$p=3$ & 2.243 & 2.243 & 2.243 & 2.243 & 2.243 & 2.243\\
$p=4$ & 2.353 & 2.353 & 2.353 & 2.353 & 2.353 & 2.353\\
$p=5$ & 2.416 & 2.417 & 2.417 & 2.417 & 2.417 & 2.417\\
$p=6$ & 2.493 & 2.493 & 2.493 & 2.493 & 2.493 & 2.493\\ \hline 
\end{tabular}
\vspace{1ex}
\caption{Dependence of the condition number $\kappa$ for $p=1,\dots,6$ and $h=2^{-5},\dots,2^{-10}$, with $\tau=0.1$. For fixed $p$, the condition number is insensitive to the mesh size $h$, i.e.\ variation of $M$ and $A$. The condition number slowly increases with $p$ towards its asymptotic value of approximately $2.686$, see Table~{\upshape\ref{tab:asymptotic_p}}.}
\label{tab:ph_dependence}
\end{center}
\end{table}
Table~\ref{tab:ph_dependence} shows that the condition number has little to no dependence on mesh refinement, i.e.\ variation of $M$ and $A$. As $p$ becomes very large, the condition number approaches an asymptotic value of around $2.686\leq 4$, as shown by Table~\ref{tab:asymptotic_p}; our goal in testing our preconditioners here with very high polynomial degrees is merely to ascertain the asymptotic behaviour of the condition number.
The proposed preconditioners are thus fully robust with respect to the parameters, in agreement with the theoretical bound $\kappa\leq 4$ from~\eqref{eq:condition_number}. Furthermore, this experiment suggests that the efficiency of the preconditioners can exceed theoretical expectations, as shown by condition numbers $\kappa \leq 2.686$ throughout these tests.

\begin{table}
\begin{center}
\begin{tabular}{| c |c c c c c c  |}
\hline  & & & & & &  \vspace{-2.5ex} \\
$p$ & 8 & 16 & 32 & 64 & 128 & 256 \\ \hline 
$\kappa$ & 2.558 & 2.643 & 2.674  & 2.684 & 2.686 &2.686 \\ \hline
\end{tabular}
\vspace{1ex}
\caption{Asymptotic behaviour of the condition number with respect to the polynomial degree $p=2^m$, $m=3,\dots,8$, with fixed $h=2^{-5}$ and $\tau=0.1$. The asymptotic value of $\kappa$ appears to be $\kappa \approx 2.686$. This suggests that the theoretical bound $\kappa \leq 4$ of \eqref{eq:condition_number} may not be quantitatively sharp in all cases, although it is nevertheless very close.}
\label{tab:asymptotic_p}
\end{center}
\end{table}

\subsection{Iteration counts and multigrid preconditioning}\label{sec:num_exp_2}
The condition number bound \eqref{eq:condition_number} holds for the preconditioner $\bo H$, which assumes that exact solvers are used to apply the inverses of the matrices $M+\tau \sqrt{\lambda_j}/2 A$. However, in practice, it is desirable to use a cheap approximation, such as a small number of iterations from an iterative solver for $M+\tau \sqrt{\lambda_j}/2 A$, which leads to an approximation of the ideal preconditioner $\bo H^{-1}$. 

\begin{table}
\begin{center}
\begin{tabular}{| c c | c c c c| }
\hline  & & & & &  \vspace{-2.5ex} \\
Mesh size & DoF & Direct & $1$ V-cycle & 2 V-cycles & 3 V-cycles \\ \hline 
 & & & & &  \vspace{-2.5ex} \\
$h=2^{-6}$ & 11 907  & 7 & 8 & 7 & 7  \\
$h=2^{-7}$ & 48 387  & 7 & 8 & 7 & 7  \\
$h=2^{-8}$ & 195 075 & 7 & 8 & 7 & 7  \\
$h=2^{-9}$ & 783 363 & 7 & 8 & 7 & 7 \\
$h=2^{-10}$& 3 139 587 & 7 & 8 & 7 & 7 \\ \hline 
\end{tabular}	\vspace{1ex}
\caption{Number of PCG iterations required for convergence of the PCG algorithm in the experiment of section~{\upshape\ref{sec:num_exp_2}}, for various mesh sizes and corresponding number of degrees of freedom. Fast convergence of PCG is observed in all cases, with the iteration counts being mesh-independent. The method using $1$ V-cycle appears as the most efficient in this experiment.}
\label{tab:multigrid_preconditioning}
\end{center}	
\end{table}

In this experiment, we study the effect of this approximation, in particular when the inverse of $M+\tau \sqrt{\lambda_j}/2 A$ is approximated by a small number of multigrid V-cycles. Let $M$ and $A$ be defined as the mass and stiffness matrices of the piecewise-linear finite element space defined on a uniform triangulation of size $h=2^{-k}$, $k=6,\dots,10$ in the two-dimensional domain $\Om = (0,1)^2$. The finest mesh thus leads to more than one million degrees of freedom (DoF) for $\bv$.
We consider the system $\bo L \bo u =\bo g$, with a chosen exact solution $\bo u$. We approximate solvers for $M+\tau \sqrt{\lambda_j}/2 A$ by applying $n$ V-cycles, $n\in\{1,2,3\}$, with symmetric Gauss--Seidel smoothers. Thus each application of the preconditioner requires $2n$ V-cycles per block.
To obtain a fair comparison of all the preconditioners, a relative tolerance of $10^{-6}$ of the true error in the energy norm was used to determine convergence of~PCG, and zero initial guesses were used for all computations.

Table~\ref{tab:multigrid_preconditioning} shows the PCG iteration counts for both ideal and approximate preconditioners; in this experiment, we set $p=2$ and $\tau = 0.1$, and thus there are more than 3 million degrees of freedom for $\Vp$ on the finest mesh.
Fast and robust convergence of PCG is observed in all cases, showing the effectiveness of these preconditioners. For this experiment, the method using $1$ V-cycle appears as the most efficient, as the reduction in cost per iteration outweighs the additional PCG iteration required.
\begin{table}
\begin{center}
\begin{tabular}{ | c | c  c |  c c  | }
\hline  & & & & \vspace{-2.5ex} \\
 & \multicolumn{2}{c}{$h=2^{-9}$} & \multicolumn{2}{c}{$h=2^{-10}$} \vspace{-2pt}\\ 
Order & Iterations & DoF & Iterations & DoF  \\ \hline 
$p=4$ & 8 & 1 305 605 & 8 & 5 232 645 \\
$p=6$ & 9 &1 827 847 & 9 &  7 325 703  \\
$p=8$ & 9 & 2 350 089 & 9 & 9 418 761 \\
$p=10$ & 9 & 2 872 331 & 9 & 11 511 819 \\
$p=12$ & 9 & 3 394 573 & 10 & 13 604 877 \\
$p=14$ & 9 & 3 916 815 & 9 & 15 697 935 \\  \hline             
\end{tabular}\vspace{1ex}
\caption{Dependence of the PCG iteration counts on the polynomial degrees $p=4,\dots,14$ in the experiment of section~{\upshape\ref{sec:num_exp_2}}, with approximate solvers using 1 multigrid V-cycle. The number of PCG iterations remains uniformly bounded, showing full robustness with respect to the polynomial degree, even up to 15 million DoF. The additional iteration for the case $p=12$, $h=2^{-10}$ is explained by the fact that the error on the $9$-th iteration was very close to, but above, the convergence criterion.}
\label{tab:p_dependence_multigrid}
\end{center}
\end{table}
In Table~\ref{tab:p_dependence_multigrid}, we study the dependence of the iteration counts on the polynomial degree. It is found that the use multigrid preconditioners retains the robustness of the preconditioner with respect to the approximation order $p$. 
 This is indeed the expected result, since Theorem~\ref{thm:eigenvalue_distribution} shows that as $p$ is increased, for large $j$ the matrices $M+\tau \sqrt{\lambda_j}/2 A$ are spectrally closer to $M$ and thus the efficiency of the multigrid V-cycle increases.
Our main goal in this experiment is to check the robustness of our preconditioners with respect to the polynomial degree, rather than to actually propose employing high-order approximations in time coupled with low-order approximations in space. Nevertheless, such high-order temporal approximations are encountered in the context of $hp$-version methods \cite[]{Schotzau2000,vonPetersdorff2004,Schotzau2010,WerderGerdesSchotzauSchwab2001}. 
We have also computed the results of this experiment using smaller time-step sizes $\tau$, and we observed a decrease in the iteration counts, as predicted by \eqref{eq:tau_asymptotics}.
The conclusion drawn from these results is that in practice, standard approximate solvers can be used while retaining the key properties of the ideal preconditioner $\bo H$, namely the fast convergence of the PCG algorithm and the robustness with respect to the discretization parameters.

\subsection{Parabolic problems with inexact solvers}\label{sec:num_exp_3}

The action of $\bo L$ as given in \eqref{eq:compute_L} requires the action of $A^{-1}$. In many practical applications, it is desirable to approximate this step by using an inexact solver, such as a fixed number of multigrid V-cycles, leading to an approximation $\bo{\hat{L}}\approx \bo L$. This raises the question of whether this can be performed without affecting firstly the accuracy of the solution, and secondly the performance of the preconditioning strategy. 
 This experiment provides evidence that inexact solvers can indeed be used without compromising these important objectives.

Consider the piecewise-linear simplicial conforming finite element approximation of the heat equation in the unit square $\Om=(0,1)^2$, final time $T=0.1$, imposed with initial datum $u_0(x,y) = x(1-x)\sin(\pi y)$ and with homogeneous Dirichlet lateral boundary conditions. This initial datum is chosen as it leads to a solution with decreased temporal regularity, see \cite[]{Schotzau2000}.
To compare near-exact and inexact solvers, we consider two approaches:
\begin{enumerate}
\item[(D)] direct solvers are used in the application of $\bo P^{\top}$, $\bo H^{-1}$ and $\bo L$,
\item[(MG)] 5 multigrid V-cycles are used to approximate the application of $A^{-1}$ in $\bo L$ and $\bo P^\top$, and 1 multigrid V-cycle is used to approximate $(M+\tau \sqrt{\lambda_j}/2 A)^{-1}$ in $\bo H^{-1}$, as in section~\ref{sec:num_exp_2}.
\end{enumerate}
We point out that the method (MG) does not use any direct solvers throughout the entire computation, and that each PCG iteration costs 7 V-cycles per block.

Table~\ref{tab:approximate_L_1} shows the final time errors $\norm{u(T)-u_\tau (T)}_{L^2(\Om)}$, for varying $\tau$, where $u$ denotes the exact solution of the PDE, and where $u_\tau$ denotes the discrete solution. Here, we used $h=2^{-8}$ and $p=1$. For comparison, we also show the errors attained for $p=0$, i.e.\ the backward Euler (BE) method.
The inexact solvers for (MG) retain the accuracy of the method, as the difference in solutions between (D) and (MG) is several orders of magnitude smaller than the difference to the exact solution. For $p=1$, the expected third order super-convergence rate is observed.

Table~\ref{tab:approximate_L_2} shows the average number of PCG iterations per time-step required by both approaches in order to obtain a residual tolerance of $10^{-6}$, as well as the final time error between the approximate solution $u_\tau$ for (D) and $\hat{u}_\tau$ for (MG). This shows that the number of PCG remains robust with respect to the approximation of $\bo L$ entailed by (MG).
This experiment shows that the use of inexact solvers can retain both the accuracy of the DG time-stepping method as well as the efficiency of the proposed preconditioners. 

\begin{table}
\begin{center}
\begin{tabular}{| c | c c c c |}
\hline  & & & & \vspace{-2.5ex} \\ 
$\tau/T$ & Error (BE) & Error (D) & Error (MG) & (D)--(MG) \\ \hline 
 & & & & \vspace{-2.5ex} \\ 
$ 1 $ & $2.546\times 10^{-2}$ & $3.078\times 10^{-3}$ & $3.078\times 10^{-3}$ & $1.303\times 10^{-8}$ \\
$ 1/2$ & $1.475\times 10^{-2}$ & $3.934\times 10^{-4}$ & $3.934\times 10^{-4}$ & $2.129\times 10^{-8}$ \\
$ 1/4$ & $8.008\times 10^{-3}$ & $5.444\times 10^{-5}$ & $5.441\times 10^{-5}$ & $3.219\times 10^{-8}$ \\
$ 1/8$ & $4.178\times 10^{-3}$ & $8.718\times 10^{-6}$ & $8.641\times 10^{-6}$ & $8.126\times 10^{-8}$ \\ \hline 
\end{tabular}
\vspace{1ex}
\caption{Final time errors $\norm{u(T)-u_\tau(T)}_{L^2(\Om)}$ obtained by the DG time-stepping method for the problem of section~{\upshape\ref{sec:num_exp_3}}, for $p=0$ (the backward Euler method) and for $p=1$ with direct solvers (D) and inexact multigrid (MG) solvers. The last column gives $\norm{u_\tau(T)-\hat{u}_\tau(T)}_{L^2(\Om)}$, where $u_\tau$, respectively $\hat{u}_\tau$, denotes the solution obtained by (D), respectively (MG).}
\label{tab:approximate_L_1}
\end{center}
\end{table}

\begin{table}
\begin{center}
\begin{tabular}{| c | c c c c | }	
\hline  & & & & \vspace{-2.5ex} \\ 
Iterations & $\tau =T$ & $\tau = T/2$ & $\tau =T/4$ & $\tau=T/8$ \\ \hline
(D) & 4 & 3.5 & 3 & 3 \\
(MG) & 6 & 5 & 5 & 5 \\ \hline 
\end{tabular}
\vspace{1ex}
\caption{Average number of PCG iterations per time-step for the problem of section~{\upshape{\ref{sec:num_exp_3}}}, with $p=1$  and with either direct (D) or inexact multigrid (MG) solvers.}
\label{tab:approximate_L_2}
\end{center}
\end{table}

\section*{Conclusion}
We have developed efficient and robust preconditioners enabling the fast solution of the DG time-stepping method by the preconditioned conjugate gradient algorithm. The analysis and numerical experiments show that the ideal and approximate preconditioners are robust with respect to all discretization parameters and lead to low condition numbers for the preconditioned system. Thus the high-order solution of large problems by the DG time-stepping method is tractable under the proposed approach.

\appendix
\section{Analysis of eigenvalues}\label{sec:eigenvalues}
In this section, we present the proof of Theorem~\ref{thm:eigenvalue_distribution}.
Without loss of generality, it is sufficient to consider only $p\geq 2$ throughout this section.
We will make use of Weyl's Theorem from eigenvalue perturbation theory~\cite[]{Demmel1997}.
\begin{theorem}[Weyl]
For a positive integer $n$, let $T$ and $\hat{T}$ denote symmetric $n\times n$ matrices. Let $\alpha_1\geq \dots \geq \alpha_n$ denote the eigenvalues of $T$ and let $\hat{\alpha_1}\geq \dots \geq \hat{\alpha_n}$ denote the eigenvalues of $\hat{T}$. Then, for each $j=1,\dots,n$, we have
\begin{equation}
\abs{\alpha_j - \hat{\alpha}_j} \leq \norm{T-\hat{T}}_{2},
\end{equation}
where $\norm{\cdot}_{2}$ denotes the matrix $2$-norm.
\end{theorem}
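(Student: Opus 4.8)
The plan is to derive the bound from the Courant--Fischer min-max characterization of the eigenvalues of a symmetric matrix, viewing $\hat{T}$ as an additive perturbation $\hat{T} = T + E$ of $T$, where $E \coloneqq \hat{T} - T$. Since $T$ and $\hat{T}$ are symmetric, so is $E$, and the spectral norm is unchanged under negation, so $\norm{E}_2 = \norm{T - \hat{T}}_2$. The whole argument then reduces to showing that the min-max value defining each eigenvalue is monotone under a perturbation of norm $\norm{E}_2$.

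First I would recall (or establish) the min-max identity: for a symmetric $n \times n$ matrix $T$ with eigenvalues $\alpha_1 \geq \dots \geq \alpha_n$ and each index $j$,
\[
\alpha_j = \max_{\substack{S \subseteq \R^n \\ \Dim S = j}} \; \min_{\substack{x \in S \\ x \neq 0}} \frac{x^\top T x}{x^\top x},
\]
the outer maximum running over all $j$-dimensional subspaces $S$, with the analogous identity holding for $\hat{T}$ and $\hat{\alpha}_j$. The key elementary estimate is that for every nonzero $x$, Cauchy--Schwarz gives $\abs{x^\top E x} \leq \norm{x}\,\norm{Ex} \leq \norm{E}_2 \norm{x}^2$, so that $-\norm{E}_2 \leq x^\top E x / (x^\top x) \leq \norm{E}_2$.

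With these two ingredients the conclusion is immediate. Writing $\hat{T} = T + E$, for any subspace $S$ and any nonzero $x \in S$ the Rayleigh quotient splits additively, whence
\[
\frac{x^\top \hat{T} x}{x^\top x} \leq \frac{x^\top T x}{x^\top x} + \norm{E}_2 .
\]
Taking the inner minimum over $x \in S$ and then the outer maximum over $j$-dimensional $S$ in the Courant--Fischer identity yields $\hat{\alpha}_j \leq \alpha_j + \norm{E}_2$. The reverse inequality $\hat{\alpha}_j \geq \alpha_j - \norm{E}_2$ follows by the symmetric argument using the lower bound on the Rayleigh quotient of $E$ (equivalently, by interchanging the roles of $T$ and $\hat{T}$). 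Combining the two gives $\abs{\alpha_j - \hat{\alpha}_j} \leq \norm{E}_2 = \norm{T - \hat{T}}_2$, as required.

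The only substantive point is the Courant--Fischer characterization itself; once it is available, the perturbation bound is a one-line consequence of monotonicity, and there is no genuine obstacle. If a fully self-contained treatment is wanted, I would establish Courant--Fischer by the standard dimension-counting argument on an orthonormal eigenbasis $v_1, \dots, v_n$: the choice $S = \Span\{v_1,\dots,v_j\}$ shows the outer maximum is at least $\alpha_j$, while for an arbitrary $j$-dimensional $S$ a nontrivial intersection with $\Span\{v_j,\dots,v_n\}$---which exists since $j + (n-j+1) = n+1 > n$---forces its inner minimum to be at most $\alpha_j$. This intersection/dimension count is the single place requiring care; the remainder is direct substitution, and keeping the decreasing ordering of the $\alpha_j$ consistent throughout is the only bookkeeping concern.
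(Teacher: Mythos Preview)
Your proof is correct and follows the standard Courant--Fischer route to Weyl's inequality. Note, however, that the paper does not actually prove this theorem: it is quoted as a known result from \cite{Demmel1997} and used as a tool in the proof of Theorem~\ref{thm:eigenvalue_distribution}. Your argument is essentially the one found in that reference, so there is nothing to compare against here beyond observing that you have supplied a self-contained justification where the paper simply cites the literature.
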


We are now ready to prove Theorem~\ref{thm:eigenvalue_distribution}.

\emph{Proof of Theorem~{\upshape\ref{thm:eigenvalue_distribution}}.}
Recall that the matrix $T$ is defined by \eqref{eq:T_def}. We start by noting that the matrix $T$ is pentadiagonal, i.e. $T_{kj} =0$ if $\abs{k-j}>2$. Moreover, it is easy to show from the properties of Legendre polynomials that there exists a constant $C$ independent of $p$ such that
\begin{equation}\label{eq:mag_psi_k}
\begin{aligned}
	\norm{\psi_k}_{L^2}^2 = T_{kk} \leq C (k+1)^{-2} & & &\forall\, k=0,\dots,p.
\end{aligned}
\end{equation}
The Cauchy--Schwarz inequality implies that $\abs{T_{jk}}\leq \sqrt{T_{kk} T_{jj}}$, and thus all entries of $T$ are uniformly bounded.
Since $T$ has at most $5$ nonzero entries per row, and all entries are uniformly bounded, the Gershgorin discs of $T$ are bounded independently of $p$. 
The Gershgorin Disc Theorem therefore implies that there is a constant $C$, independent of $p$, such that $\lambda_0$, the maximal eigenvalue of $T$, satisfies $\lambda_0 \leq C$. This corresponds to \eqref{eq:eigenvalue_bound} for the case $j=0$.
We consider now $j\geq 1$, and without loss of generality, we may assume that $p\geq 2$. For $0\leq q \leq p-1$, we define the orthogonal projector $\pi_{q}\colon \calP_p\tends \calP_{q}$ by
\begin{equation}\label{eq:projector_def}
\pi_q \colon \sum_{j=0}^p \underline{v}_j \,\psi_j \mapsto  \sum_{j=0}^{q} \underline{v}_j \,\psi_j,
\end{equation}
where the polynomials $\psi_j$ are defined by \eqref{eq:psi_definition}, and $\underline{v}=(\underline{v}_0,\dots,\underline{v}_p)\in \R^{p+1}$.
Define the matrix $\hat{T}_q \in \R^{(p+1)\times (p+1)}$ by $(\hat{T}_q)_{kj} \coloneqq (\pi_q \psi_k,\pi_q \psi_j)_{L^2}$. Note that $(\hat{T}_q)_{kj}$ is zero if either $k$ or $j$ is greater than $q$, and equals $T_{kj}$ otherwise. Thus, the matrix $\hat{T}_q$ has the general form
\begin{equation}\label{eq:hat_T_form}
\hat{T}_q = \begin{bmatrix}
	\tilde{T} & 0 \\ 0 & 0 
\end{bmatrix},
\end{equation}
where $\tilde{T}$ denotes the $(q+1)\times(q+1)$ principal submatrix of $T$.
The main step of the proof is to use \eqref{eq:mag_psi_k} repeatedly to find an upper bound for
\[
\norm{T-\hat{T}_q}_2 = \sup_{\underline{u}\in \R^{p+1}\setminus\{0\}}\sup_{\underline{v}\in \R^{p+1}\setminus\{0\}}\frac{\underline{v}^\top (T-\hat{T}_q ) \underline{u}}{\norm{\underline{u}}_2 \norm{\underline{v}}_2}.
\]
For arbitrary $\underline{u}$ and $\underline{v}\in \R^{p+1}\setminus\{0\}$, define the polynomials $u \coloneqq \sum_{j=0}^p \underline{u}_j \psi_j $ and $v = \sum_{j=0}^p \underline{v}_j \psi_j $. Then, we have
\begin{equation}
\begin{split}
\underline{v}^\top (T-\hat{T}_q ) \underline{u} & = (u,v)_{L^2} - (\pi_q u,\pi_q v)_{L^2} 
\\ & = (u-\pi_q u,v-\pi_q v)_{L^2} + (\pi_q u,v-\pi_q v)_{L^2} + (u-\pi_q u,\pi_q v)_{L^2}.
\end{split}
\end{equation}
It follows from the definition of $\pi_q$ in \eqref{eq:projector_def} that
\[
\norm{u-\pi_q u}_{L^2}^2 = \sum_{k=q+1}^p\sum_{j=q+1}^p \underline{u}_k\,  (\psi_k,\psi_j)_{L^2}\, \underline{u}_j.
\]
Since $(\psi_k,\psi_j)_{L^2}=T_{kj} =0$ if $\abs{k-j}>2$, the strengthened Cauchy--Schwarz inequality yields
\begin{equation}\label{eq:eigenvalue_bound_1}
\sum_{k=q+1}^p\sum_{j=q+1}^p \underline{u}_k  (\psi_k,\psi_j)_{L^2} \underline{u}_j \leq 5 \left( \sum_{k=q+1}^p  \norm{\psi_k}_{L^2}^2 \abs{\underline{u}_j}^2 \right)
 \leq \frac{C}{(q+1)^2} \norm{\underline{u}}_2^2.
\end{equation}
Note that this implies the following a priori estimate for the orthogonal projector:
\begin{equation}
\norm{u-\pi_q u}_{L^2} \leq  C (q+1)^{-1}\norm{ (\calI u)^\prime }_{L^2} \quad \forall\,u\in \calP_p.
\end{equation}
Thus \eqref{eq:eigenvalue_bound_1} and the Cauchy--Schwarz inequality imply that
\begin{equation}\label{eq:eigenvalue_bound_2}
\abs{(u-\pi_q u,v-\pi_q v)_{L^2}} \leq C (q+1)^{-2} \norm{\underline{u}}_2 \norm{\underline{v}}_2.
\end{equation}
For $k\leq q$ and $j\geq q+1$, we have $(\psi_k,\psi_j)=0$ if $k<q-1$ and $j>q+2$, and thus
\[
(\pi_q u,v-\pi_q v)_{L^2}=\sum_{k=0}^q \sum_{j=q+1}^p \underline{u}_k (\psi_k,\psi_j)\underline{v}_j = \sum_{k=\max(q-1,0)}^q \sum_{j=q+1}^{\min(p,q+2)} \underline{u}_k (\psi_k,\psi_j)\underline{v}_j.
\]
The Cauchy--Schwarz inequality thus implies that 
\begin{equation}\label{eq:eigenvalue_bound_3}
\begin{split}
\abs{(\pi_q u,v-\pi_q v)_{L^2}} & \leq 2 \left( \sum_{k=\max(q-1,0)}^q\abs{\underline{u}_k}^2 \norm{\psi_k}_{L^2}^2 \right)^{\frac{1}{2}}
\left(\sum_{j=q+1}^{\min(p,q+2)} \abs{\underline{v}_j}^2 \norm{\psi_j}_{L^2}^2\right)^{\frac{1}{2}} \\
&\leq  C (q+1)^{-2} \norm{\underline{u}}_{2} \norm{\underline{v}}_2.
\end{split}
\end{equation}
An identical argument shows that 
\begin{equation}\label{eq:eigenvalue_bound_4}
\abs{(u-\pi_q u,\pi_q v)_{L^2}}\leq  C (q+1)^{-2} \norm{\underline{u}}_{2} \norm{\underline{v}}_2.
\end{equation}
Combining \eqref{eq:eigenvalue_bound_2}, \eqref{eq:eigenvalue_bound_3} and \eqref{eq:eigenvalue_bound_4} implies that there exists a constant $C$, independent of $p$ and $q$, such that 
\begin{equation}
\norm{T-\hat{T}_q}_2 \leq  C (q+1)^{-2}.
\end{equation}
Therefore, Weyl's Theorem implies that the eigenvalues $\lambda_j$ of $T$ and $\hat{\lambda}_j$ of $\hat{T}_q$ satisfy
\begin{equation}
\begin{aligned}
\abs{	\lambda_j - \hat{\lambda}_j } \leq  C (q+1)^{-2} & & &\forall\, j=0,\dots,p,
\end{aligned}
\end{equation}
where the constant $C$ is independent of $j$, $q$ and $p$.
However, it is clear from \eqref{eq:hat_T_form} that $\hat{\lambda}_j = 0$ for $j\geq q+1$, and thus there exists a constant $C$ independent of $p$ and $q$ such that
\begin{equation}
\begin{aligned}
\lambda_j=\abs{\lambda_{j}-\hat{\lambda}_j} \leq  C (q+1)^{-2} & & & \forall\,j\geq q+1.
\end{aligned}
\end{equation}
The left-hand side of this inequality is independent of $q$ while the right-hand side is valid of all $q\leq j-1$. Therefore, the choice $q=j-1$ yields \eqref{eq:eigenvalue_bound} for $j=1,\dots,p$.

The lower bound \eqref{eq:eigenvalue_lower_bound} follows from inverse inequalities. Indeed, $\lambda_p$ satisfies
\[
\lambda_p = \min_{v\in\calP_p\setminus\{0\}} \frac{\norm{v}_{L^2}^2}{\norm{(\calI v)^\prime}_{L^2}^2}.
\]
To obtain \eqref{eq:eigenvalue_lower_bound}, it is therefore enough to show the inverse inequality
\begin{equation}\label{eq:calI_inverse_inequality}
\begin{aligned}
\norm{(\calI v)^\prime }_{L^2} \leq C (p+1)^2 \norm{v}_{L^2}	& & & \forall\,v\in\calP_p.
\end{aligned}
\end{equation}
For any $v\in \calP_p$, standard inverse inequalities \cite[]{Schwab1998} imply that
\begin{equation}\label{eq:eigenvalue_lower_bound_1}
\begin{split}
\norm{(\calI v)^\prime }_{L^2} & \leq \norm{v^\prime}_{L^2} + \norm{v}_{L_\infty} \tfrac{1}{2}\norm{(L_p-L_{p+1})^\prime}_{L^2}
\\ & \lesssim \left((p+1)^2 + (p+1) \,\norm{(L_p-L_{p+1})^\prime}_{L^2}\right) \norm{v}_{L^2}.
\end{split}
\end{equation}
It follows from \eqref{eq:derivative_legendre} that
\begin{equation}\label{eq:norm_derivative_legendre}
	\norm{(L_p-L_{p+1})^\prime}_{L^2}^2 = \sum_{j=0}^p \frac{8 (j+\tfrac{1}{2})^2}{2j+1} = 2\left(p+1\right)^2.
\end{equation}
Substituting \eqref{eq:norm_derivative_legendre} into \eqref{eq:eigenvalue_lower_bound_1} yields \eqref{eq:calI_inverse_inequality}.\qed

\section*{Acknowledgements}
The author wishes to express his thanks to Paul~Houston, Lorenz~John, Christian~Kreuzer, Endre~S\"uli and Martin~Vohral\'ik for many helpful discussions. The author was supported by an EPSRC Doctoral Prize at the Mathematical Institute, University of Oxford, during the period in which part of this work was completed.

\end{document}